
\documentclass[10pt]{article}
\usepackage{geometry}               
\geometry{a4paper}
\usepackage[english]{babel}
\usepackage[latin1]{inputenc}
\usepackage{amssymb,amsmath,amsthm,amsfonts}
\usepackage{graphicx}
\usepackage[usenames,dvipsnames]{xcolor}
\usepackage{cancel}
\usepackage[colorlinks]{hyperref}


%
%

\newcommand{\N}{\mathbb{N}}

\newcommand{\R}{\mathbb{R}}


\newcommand{\eps}{\varepsilon}

\newcommand{\Bcal}{{\mathcal{B}}}

\newcommand{\Ucal}{{\mathcal{U}}}

\def\XXint#1#2#3{{\setbox0=\hbox{$#1{#2#3}{\int}$ }
\vcenter{\hbox{$#2#3$ }}\kern-.6\wd0}}


\DeclareMathOperator{\dist}{dist}

\newtheorem{proposition}{Proposition}[section]
\newtheorem{theorem}[proposition]{Theorem}
\newtheorem{corollary}[proposition]{Corollary}
\newtheorem{lemma}[proposition]{Lemma}

\theoremstyle{definition}

\newtheorem{remark}[proposition]{Remark}

\numberwithin{equation}{section}
\newcommand{\beq}{\begin{equation}}
\newcommand{\eeq}{\end{equation}}
\newcommand{\ben}{\begin{enumerate}}
\newcommand{\een}{\end{enumerate}}
\newcommand{\bit}{\begin{itemize}}
\newcommand{\eit}{\end{itemize}}


\title{Normalized solutions for Sobolev critical Schr\"{o}dinger equations on bounded domains}

\author{Dario Pierotti, Gianmaria Verzini and Junwei Yu}

\begin{document}
\maketitle

\begin{abstract}
We study the existence and multiplicity of positive solutions with prescribed $L^2$-norm for 
the Sobolev critical Schr\"odinger equation on a bounded domain $\Omega\subset\mathbb{R}^N$, $N\ge3$: 
\[
-\Delta U = \lambda U + U^{2^{*}-1},\qquad U\in H^1_0(\Omega),\qquad
\int_\Omega U^2\,dx = \rho^{2},
\]
where $2^*=\frac{2N}{N-2}$.

First, we consider a general bounded domain $\Omega$ in dimension $N\ge3$, with a restriction, only 
in dimension $N=3$, involving its inradius and first Dirichlet eigenvalue. In this general case 
we show the existence of a mountain pass solution on the $L^2$-sphere, for $\rho$ belonging to a 
subset of positive measure of the interval $(0,\rho^{**})$, for a suitable threshold $\rho^{**}>0$. 
Next, assuming that $\Omega$ is star-shaped, we extend the previous result to all values  
$\rho\in(0,\rho^{**})$. 

With respect to that of local minimizers, already known in the literature, the existence of mountain 
pass solutions in the Sobolev critical case is much more elusive. In particular, our proofs are based 
on the sharp analysis of the bounded Palais-Smale sequences, provided by a nonstandard adaptation of the Struwe monotonicity trick, that we develop.
\end{abstract}
\noindent
{\footnotesize \textbf{AMS-Subject Classification}}.
{\footnotesize 35J20; 35B33; 35Q55; 35J61.}\\
{\footnotesize \textbf{Keywords}}.
{\footnotesize Nonlinear Schr\"odinger equations, constrained critical points, critical exponent, solitary waves, Struwe monotonicity trick.}

\section{Introduction}

Let $\Omega\subset\R^N$, $N\ge3$ be a bounded domain with smooth boundary. 
In this paper we deal with the existence and multiplicity of normalized 
solutions to the Sobolev critical (stationary) Schr\"odinger equation in 
$\Omega$, with homogeneous Dirichlet boundary conditions. Namely, given $\rho>0$, 
we look for solutions $(U,\lambda)\in H^1_0(\Omega)\times \R$ of the semilinear 
elliptic problem, of Brezis-Nirenberg type:
\begin{equation}\label{eq:main}
\begin{cases}
-\Delta U = \lambda U + |U|^{2^{*}-2}U  & \text{in }\Omega,\smallskip\\
\int_\Omega U^2\,dx = \rho^{2}, \quad U=0  & \text{on }\partial\Omega.
\end{cases}
\end{equation}
Here, as usual, $2^{*}:=\frac{2N}{N-2}$ denotes the Sobolev critical exponent. 

The motivation to study \eqref{eq:main} comes from its well known relation with 
the existence of standing waves for the evolutive Schr\"odinger equation on a 
bounded domain with pure Sobolev critical power nonlinearity, which in turn 
is of interest because of its applications to nonlinear optics and to the theory 
of Bose-Einstein condensation, also as a limit case of the equation on $\R^N$ 
with confining trapping potential. In particular, here we look for normalized 
solutions, meaning that the chemical potential $\lambda$ is an unknown in the 
equation and the $L^2$-norm of the solution is prescribed a priori. After the 
pioneering contribution by Jeanjean \cite{MR1430506}, in the last decade the study of 
normalized solutions of Schr\"odinger equations has become a hot topic, with a lot of 
contributions in several directions. While below we will describe with more details  
some literature which is related to our results, from this general point of view  
we mention here just a few recent papers, and we refer the interested reader to the 
references therein, for contributions e.g. about combined nonlinearities \cite{MR4107073,MR4096725,MR4433054,MR4476243}; equations with potentials \cite{MR4304693,MR4443784}; global bifurcation and/or topological approach \cite{MR4297197,MR4603876,MR4701352}, also in connection with 
Mean Field Games systems \cite{MR3660463,MR4191345,MR4570263}; equations on metric graphs \cite{MR4404069,MR4132757,MR4601303,MR4371084,MR4241295}; relations between least action and least energy ground states \cite{MR4566699}.

Solutions of \eqref{eq:main} correspond to critical points of the energy functional $E:H^{1}_{0}(\Omega)\rightarrow \mathbb{R}$, defined by
\begin{equation}\label{func:main0}
E(u)=\frac{1}{2}\int_{\Omega}|\nabla u|^{2}dx-\frac{1}{2^{*}}\int_{\Omega}|u|^{2^{*}}dx,
\end{equation}
constrained to the $L^{2}$-sphere
\begin{equation}\label{manifold:0}
M_{\rho}=\left\{U\in H^{1}_{0}(\Omega): \int_{\Omega} U^{2}=\rho^{2}\right\},
\end{equation}
with $\lambda\in\R$ the associated Lagrange multiplier.

The starting point of our work is the fact that, under an explicit smallness 
condition on $\rho>0$,  $E$ has the mountain pass geometry on $M_\rho$. This fact 
is not peculiar only of the Sobolev critical case, but rather it holds true, for a 
bounded domain, in the so-called \emph{mass--supercritical} case, namely when $2^*$ 
is replaced with an exponent $p$ with
\[
2+\frac4N < p \le 2^*
\]
(both in \eqref{eq:main} and in $E$). For this reason, one can hope to obtain two 
distinct nontrivial (actually, strictly positive) solutions of the corresponding problem: a local minimizer, associated to an orbitally stable standing wave for the related evolutive equation, 
and a mountain pass solution (orbitally unstable). To the best of our knowledge, 
both these solutions were first obtained in \cite{ntvAnPDE}, in case $\Omega$ is a 
ball and $p$ is Sobolev subcritical. Subsequently, the existence of a local minimizer 
was extended to the Sobolev subcritical case in generic bounded domains in 
\cite{MR3689156}, and to the Sobolev critical case in bounded domains in 
\cite{MR3918087}:
\begin{theorem}[{\cite[Thm. 1.11]{MR3918087}}]\label{Th:mini}
If $N\geq3$ and $\Omega\subset\R^N$ is a bounded domain, then there exists  
$\rho^{*}>0$ such that if $0<\rho<\rho^{*}$, then \eqref{eq:main} has a positive solution, 
which corresponds to a local minimizer of $E$ on $M_{\rho}$.
\end{theorem}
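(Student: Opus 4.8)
The plan is to obtain the asserted local minimizer as an \emph{unconstrained-on-a-set} minimizer: I would minimize $E$ over the subset of $M_\rho$ made of functions of small $H^1_0$-size, chosen so finely that the non-compactness of the embedding $H^1_0(\Omega)\hookrightarrow L^{2^*}(\Omega)$ cannot spoil the direct method. The starting point is the universal lower bound coming from the best-constant Sobolev inequality $\|u\|_{2^*}^2\le S^{-1}\|\nabla u\|_2^2$: for every $u\in H^1_0(\Omega)$,
\[
E(u)\ \ge\ f\!\left(\|\nabla u\|_2^2\right),\qquad f(t):=\frac t2-\frac{S^{-2^*/2}}{2^*}\,t^{2^*/2}.
\]
Since $2^*/2=\tfrac{N}{N-2}>1$, the function $f$ is nonnegative on an interval $[0,T]$, vanishing only at $0$ and at a point $T=T(N)>0$, strictly positive on $(0,T)$, with a unique interior maximum. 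I would then fix $\bar t:=T/2$ and set $A:=\{u\in M_\rho:\|\nabla u\|_2^2<\bar t\}$, an open subset of $M_\rho$; on its boundary $\partial A=\{u\in M_\rho:\|\nabla u\|_2^2=\bar t\}$ one has $E\ge f(\bar t)>0$.

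Next I would exhibit a competitor lying strictly below this barrier. Taking $\varphi_1>0$ the first Dirichlet eigenfunction of $-\Delta$ on $\Omega$ with $\|\varphi_1\|_2=1$, the function $\rho\varphi_1\in M_\rho$ satisfies $\|\nabla(\rho\varphi_1)\|_2^2=\lambda_1(\Omega)\rho^2$ and $E(\rho\varphi_1)=\tfrac12\lambda_1(\Omega)\rho^2-\tfrac{\rho^{2^*}}{2^*}\|\varphi_1\|_{2^*}^{2^*}\le\tfrac12\lambda_1(\Omega)\rho^2$. Hence, choosing a threshold $\rho^{*}=\rho^{*}(N,\Omega)>0$ small enough that $\lambda_1(\Omega)(\rho^{*})^2<\bar t$ and $\tfrac12\lambda_1(\Omega)(\rho^{*})^2<f(\bar t)$, for every $0<\rho<\rho^{*}$ one gets $\rho\varphi_1\in A$ together with
\[
c_\rho:=\inf_{\overline A}E\ \le\ E(\rho\varphi_1)\ <\ f(\bar t)\ \le\ \inf_{\partial A}E .
\]

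Then I would run the direct method on $\overline A$. A minimizing sequence $(u_n)\subset\overline A$ is bounded in $H^1_0(\Omega)$, so, along a subsequence, $u_n\rightharpoonup u$ in $H^1_0(\Omega)$, $u_n\to u$ in $L^2(\Omega)$ (Rellich) and a.e.; thus $u\in M_\rho$ and $\|\nabla u\|_2^2\le\bar t$, i.e. $u\in\overline A$. Writing $v_n:=u_n-u\rightharpoonup 0$, the Brezis--Lieb lemma together with $\|\nabla u_n\|_2^2=\|\nabla u\|_2^2+\|\nabla v_n\|_2^2+o(1)$ gives
\[
E(u_n)=E(u)+\Big(\tfrac12\|\nabla v_n\|_2^2-\tfrac1{2^*}\|v_n\|_{2^*}^{2^*}\Big)+o(1)\ \ge\ E(u)+f\big(\|\nabla v_n\|_2^2\big)+o(1).
\]
Since $\|\nabla v_n\|_2^2=\|\nabla u_n\|_2^2-\|\nabla u\|_2^2+o(1)\le\bar t+o(1)<T$, along a further subsequence $\|\nabla v_n\|_2^2\to s\in[0,T)$; letting $n\to\infty$ yields $c_\rho-E(u)\ge f(s)\ge0$, while $u\in\overline A$ gives $E(u)\ge c_\rho$. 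Therefore $E(u)=c_\rho$ and $f(s)=0$, so $s=0$, i.e. $u_n\to u$ strongly in $H^1_0(\Omega)$. If $u\in\partial A$ then $E(u)\ge\inf_{\partial A}E>c_\rho$, a contradiction; hence $\|\nabla u\|_2^2<\bar t$, so $u$ lies in the $M_\rho$-open set $A$ and is a local minimizer of $E|_{M_\rho}$. Since $E$ and the constraint are invariant under $u\mapsto|u|$, I may take $u\ge0$; by the Lagrange multiplier rule $-\Delta u=\lambda u+u^{2^*-1}$ for some $\lambda\in\R$, elliptic regularity (Brezis--Kato/Moser) gives $u\in L^\infty(\Omega)\cap C^2(\Omega)$, and the strong maximum principle applied to $-\Delta u-\lambda u\ge0$ yields $u>0$ in $\Omega$, producing the required positive solution of \eqref{eq:main}.

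The one genuine obstacle is exactly what the critical exponent forces: the embedding into $L^{2^*}$ is not compact, so a priori a minimizing sequence could shed mass into a concentrating Aubin--Talenti bubble. The device that defuses this is the confinement to $\{\|\nabla u\|_2^2<\bar t\}$ with $\bar t<T$: any "bubbling remainder'' $v_n$ then inherits $\|\nabla v_n\|_2^2<T$, precisely the range where $f\ge0$, so splitting off such a part can only raise the limiting energy; combined with the strict gap $c_\rho<\inf_{\partial A}E$ — itself the visible trace of the small-mass regime, exhibited by the linear test function $\rho\varphi_1$ — this simultaneously rules out concentration (forcing strong convergence of the minimizing sequence) and pins the minimizer in the interior of $A$.
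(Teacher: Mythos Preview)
Your proof is correct and follows the same overall strategy as the paper: restrict to the sublevel set of the Dirichlet norm where the Sobolev lower bound $f(t)=\tfrac12 t-\tfrac{1}{2^*}S^{-2^*/2}t^{2^*/2}$ creates a barrier, compare with the competitor $\rho\varphi_1$, and show that a minimizing sequence cannot lose compactness. The difference lies in the compactness step. The paper first passes, via Ekeland's principle, to a Palais--Smale sequence and then invokes Struwe's global compactness (their Corollary~\ref{cor:split}): if the convergence were not strong, at least one Aubin--Talenti bubble would appear and contribute $S^{N/2}\mu^{1-N/2}$ to $\|\nabla u_n\|_2^2$, contradicting the bound $\|\nabla u_n\|_2^2<\bar\alpha=S^{N/2}\mu^{1-N/2}$ (their cutoff is exactly the maximizer of $f$). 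You bypass both Ekeland and the bubbling lemma, using only the Brezis--Lieb splitting $E(u_n)=E(u)+\big(\tfrac12\|\nabla v_n\|_2^2-\tfrac{1}{2^*}\|v_n\|_{2^*}^{2^*}\big)+o(1)$ together with the pointwise bound by $f$, which forces $\|\nabla v_n\|_2\to0$ directly. Your route is more elementary and self-contained for this particular statement; the paper's route, on the other hand, reuses machinery (the bubbling description of Palais--Smale sequences) that is indispensable later for the mountain-pass solution, so from their perspective there is no extra cost. Note also that the paper's choice of cutoff at $\arg\max f$ yields the explicit threshold $\rho^*=(2S^{N/2}/N\lambda_1(\Omega))^{1/2}$, whereas your $\bar t=T/2$ gives a somewhat smaller (still explicit) one.
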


On the other hand, the existence of a mountain pass solution is  
more elusive. The main difficulty in this direction is to construct a bounded  
Palais-Smale at the mountain pass level. Actually, this was one of the main issues 
faced by Jeanjean in \cite{MR1430506}, where a bounded Palais-Smale sequence at 
the mountain pass level was obtained by exploiting dilations and resorting to the 
so-called  Pohozaev manifold. This strategy, which has been successfully adopted 
in many subsequent problems, all settled in $\R^N$ (see e.g. 
\cite{MR3639521,MR4107073,MR4096725,MR4304693,MR4443784}), does not seem appropriate 
to treat problems on bounded domains. A second, more general strategy to overcome this 
issue is the so-called Struwe monotonicity trick \cite{MR970849,MR926524} 
(see also \cite{JJ_BPS}). Roughly speaking, this argument embeds a given problem into 
a family of problems, depending on a parameter, and it allows to obtain 
bounded Palais-Smale sequences, and hence mountain pass solutions, for almost every value 
of the parameter; then, a fine blow-up analysis may help to get a solution also for the 
original problem. This strategy for normalized solutions on graphs 
has been recently exploited by Chang, Jeanjean and Soave in \cite{JJS_graph_IHP}. 
We have also to mention that, for the Sobolev subcritical case in generic bounded 
domains, the first two authors of the present paper claimed the existence of a mountain pass solution 
in their previous paper \cite[Proposition 4.4]{MR3689156}, but the proof contained there misses  
the boundedness of the Palais-Smale sequence: actually, reasoning as in \cite{JJS_graph_IHP}
and exploiting the blow-up analysis which was one of the main results in \cite{MR3689156}, we 
can fill such gap and conclude the proof, see the end of Section \ref{sec:prel}.

The main results of the present paper concern the existence of a mountain pass solution 
for \eqref{eq:main}. A feature of this problem is that, to apply the monotonicity trick, 
one does not need to introduce an artificial parameter: one can use the mass $\rho$ as a 
parameter, easily obtaining the existence of a bounded Palais-Smale sequence at the mountain pass 
level for almost every mass (in the appropriate range). On the other hand, in the Sobolev critical 
case much effort is needed to investigate the strong convergence of such sequence, as we discuss below. 
An analogous issue has already been faced in the full space $\R^N$, in the case of combined 
nonlinearities \cite{MR4096725,MR4433054,MR4476243}. Given the relations between \eqref{eq:main} 
and the classical 
Brezis-Nirenberg problem \cite{BrezisNirenberg}, as one can expect, the cases $N=3$ and $N\ge4$ show 
some differences; moreover, we obtain a completely satisfactory result in the case of star-shaped 
domains, although, using a refined version of the monotonicity trick, we have also existence results
on generic bounded domains. 

In what follows, $\lambda_1(\omega)$ denotes the first eigenvalue of the Dirichlet 
Laplacian on a bounded domain $\omega\subset\R^N$, and $R_\Omega$ denotes the inradius of $\Omega$, i.e. the  
radius of the largest ball $B_R\subset\Omega$. Our main results are the following.

\begin{theorem}\label{Th:mp2}
Let $\Omega\subset\R^N$, $N\ge3$, be a bounded domain. Moreover, in case $N=3$, let us 
assume that 
\begin{equation}\label{eq:BRBR}
\lambda_{1}(B_{R_\Omega})<\frac{4}{3}\lambda_{1}(\Omega).
\end{equation}
Then there exist $0<\rho^{**}\le \rho^*$ and a measurable set $Q_0\subset(0,\rho^{**})$ such that:
\begin{enumerate}
\item for every $0<\rho\le\rho^{**}$, the set $Q_0\cap \left(0,\rho\right)$ has positive Lebesgue measure
(in particular, $|Q_0|>0$);
\item if $\rho\in \overline{Q}_0$, then \eqref{eq:main} has a second positive solution, distinct 
from the one in Theorem \ref{Th:mini};
\item if either $\rho\in Q_0$, or $\rho\in\overline{Q}_0$ is the limit of an increasing sequence of 
points of $Q_0$, then this second solution is at a mountain pass level for $E$ on $M_{\rho}$.
\end{enumerate}
\end{theorem}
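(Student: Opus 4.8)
The plan is to set up the Struwe monotonicity trick using the mass $\rho$ itself as the monotonicity parameter, and then to perform a careful compactness analysis of the resulting bounded Palais--Smale sequences. The first step is to show that, for $\rho$ small enough, $E$ restricted to $M_\rho$ has a mountain pass geometry: there is a small local minimum (the one from Theorem~\ref{Th:mini}) separated by a barrier of positive height from points of low energy which are reachable along paths on $M_\rho$. One defines the mountain pass level $c(\rho)$ as the usual min-max over paths joining the local minimizer to such a low-energy point, and one checks, by a scaling/rearrangement argument, that the map $\rho \mapsto c(\rho)$ (or rather a suitably normalized version, e.g. $\rho\mapsto c(\rho)/\rho^{\text{something}}$, or the level of an auxiliary functional depending monotonically on $\rho$) is monotone in $\rho$. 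Monotone functions are differentiable a.e., and at each point $\rho$ of differentiability the standard monotonicity-trick machinery (cf.\ \cite{MR970849,JJ_BPS,JJS_graph_IHP}) produces a \emph{bounded} Palais--Smale sequence for $E$ on $M_\rho$ at level $c(\rho)$; call $Q_0$ the set of such good parameters $\rho$, which has full measure in the relevant interval $(0,\rho^{**})$, giving item (1).

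The second, and genuinely hard, step is the strong convergence analysis of a bounded Palais--Smale sequence $(u_n)\subset M_\rho$ at level $c(\rho)$ with associated Lagrange multipliers $\lambda_n$. By boundedness we extract a weak limit $u$ and, up to subsequences, $\lambda_n\to\lambda$; then $(u,\lambda)$ solves the equation in \eqref{eq:main} but a priori only with $\int_\Omega u^2 \le \rho^2$, and the defect $u_n - u$ carries, by the Brezis--Lieb lemma and a concentration-compactness / Struwe-type decomposition, a finite number of rescaled standard bubbles (Aubin--Talenti instantons) concentrating at interior points of $\Omega$. The crux is to rule out bubbling by a level estimate: one must show that $c(\rho)$ lies strictly below the first "bad" threshold, namely $E(u) + \tfrac1N S^{N/2}$ where $S$ is the best Sobolev constant, by constructing a competitor path whose maximal energy is strictly less than that threshold — this is exactly where the Brezis--Nirenberg-type dimensional dichotomy enters ($N\ge4$ versus $N=3$), and where, in dimension $N=3$, the hypothesis \eqref{eq:BRBR} on the inradius and first eigenvalue is used to guarantee enough room for the test-function construction. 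Once bubbling is excluded, one gets $u_n\to u$ strongly in $H^1_0(\Omega)$, hence $u\in M_\rho$ is a genuine critical point at level $c(\rho)$; a maximum principle / strong maximum principle argument (applied to $|u|$, which is also a minimizer/critical point) upgrades it to a strictly positive solution, and the energy comparison $c(\rho)>0>E(\text{local minimizer})$ shows it is distinct from the solution of Theorem~\ref{Th:mini}. This proves item (2) for $\rho\in Q_0$.

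To reach the full set $\overline{Q}_0$ in item (2), and the mountain-pass characterization in item (3), the idea is a limiting argument: take $\rho\in\overline{Q}_0$ and a sequence $\rho_k\to\rho$ with $\rho_k\in Q_0$, together with the solutions $(u_k,\lambda_k)$ just produced at levels $c(\rho_k)$. One shows these solutions form a bounded family (using the energy bounds and the equation), passes to the limit to obtain a solution $(u,\lambda)$ of \eqref{eq:main} for the mass $\rho$, and again excludes loss of compactness via the same subcritical level estimate (here one uses continuity, or at least appropriate semicontinuity, of $\rho\mapsto c(\rho)$ and of $\rho\mapsto E$ of the local minimizer). If moreover $\rho_k$ can be chosen increasing, the monotonicity of $c$ forces $c(\rho_k)\uparrow c(\rho)$ and the limit solution sits exactly at the mountain pass level $c(\rho)$, giving item (3); if one only has $\rho\in\overline{Q}_0$ without monotone approximation, one still gets a positive solution, possibly at a level $\le c(\rho)$, hence item (2) without the mountain-pass label.

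I expect the main obstacle to be the level estimate $c(\rho) < E(u) + \tfrac1N S^{N/2}$ that excludes bubbling, in particular building an explicit path on the constraint $M_\rho$ (so one must carefully $L^2$-renormalize the Aubin--Talenti test functions, which interacts delicately with the cut-off near $\partial\Omega$) whose top energy beats the threshold; the dimension $N=3$ is the borderline case where the naive construction fails and the geometric hypothesis \eqref{eq:BRBR} must be invoked to close the gap, exactly mirroring the classical Brezis--Nirenberg phenomenon. A secondary technical difficulty is the nonstandard adaptation of the monotonicity trick on the $L^2$-sphere, since $M_\rho$ itself varies with the parameter $\rho$, so one cannot directly apply the off-the-shelf version of \cite{MR970849}; one typically reparametrizes via a scaling diffeomorphism onto a fixed sphere (turning the $\rho$-dependence into a coefficient in the functional) before differentiating in $\rho$.
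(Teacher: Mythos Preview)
Your compactness argument has a genuine gap. You propose to exclude bubbling via the level estimate $c(\rho) < E(u) + \tfrac{1}{N}S^{N/2}$, where $u$ is the weak limit of the bounded Palais--Smale sequence. To make this a usable inequality you need an \emph{a priori} lower bound on $E(u)$, since $u$ is unknown. The natural candidate $E(u) \ge \tfrac{1}{N}\lambda_1(\Omega)$ comes from the identity $E(u) = \tfrac{1}{N}\|\nabla u\|_2^2 + \tfrac{\lambda}{2^*}$ \emph{provided} the Lagrange multiplier $\lambda$ of the limit is nonnegative; that is exactly what the Pohozaev identity delivers on a \emph{star-shaped} domain, and it is the mechanism behind Theorem~\ref{Th:mp} (see Remark~\ref{rmk:normvslevinstrongconv} and Lemma~\ref{lem:conseq_Poho}). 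On a general bounded domain, positive solutions of \eqref{eq:main1} with $\lambda<0$ can exist (e.g.\ on annuli, cf.\ Remark~\ref{rmk:more_gen}), so no such lower bound on $E(u)$ is available and your level argument breaks down. In short, what you have sketched is the proof of Theorem~\ref{Th:mp}, not of Theorem~\ref{Th:mp2}.

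The paper's route for Theorem~\ref{Th:mp2} avoids energy estimates of the weak limit entirely: it uses the \emph{norm} splitting $\liminf\|\nabla u_n\|_2^2 \ge \|\nabla u^0\|_2^2 + S^{N/2}\mu^{1-N/2}$ from Corollary~\ref{cor:split}, so bubbling is excluded once one knows $\|\nabla u_n\|_2^2 < S^{N/2}\mu^{1-N/2} + \lambda_1(\Omega)$. Mere boundedness of the Palais--Smale sequence does not give this; one needs quantitative control of the derivative $c'_\mu$ in the monotonicity-trick bound $\|\nabla u_n\|_2^2 \le 2c_\mu - 2c'_\mu\mu + 6\mu + o_n(1)$ (Lemma~\ref{JJ:theo}). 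The key Lemma~\ref{monoty} compares $c_\mu$ with the explicit upper envelope $g(\mu)$ of Lemma~\ref{energy:mp} and shows that $|c'_\mu| \le (1+\delta(\mu))|g'(\mu)|$ on a set $S_0$ of \emph{positive} (not full) measure in every subinterval $(0,\mu)$; plugging this in closes the norm inequality. This is why the statement only claims positive measure for $Q_0$, contrary to your assertion of full measure, and why items~2 and~3 are separated: outside $S_0$ there is no control on $c'_\mu$, so existence at the mountain pass level is recovered only by approximation from below within $S_0$, using left-continuity of $\mu\mapsto c_\mu$.
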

\begin{theorem}\label{Th:mp}
Under the assumptions of Theorem \ref{Th:mp2}, assume furthermore that 
$\Omega$ is star-shaped. Then there exists $0<\rho^{**}\le \rho^*$ such that 
for every $0<\rho<\rho^{**}$, \eqref{eq:main} has a second positive solution, at a mountain pass level 
for $E$ on $M_{\rho}$.
\end{theorem}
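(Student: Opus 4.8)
The plan is to start from Theorem~\ref{Th:mp2}, which already provides, for a.e.\ $\rho$ in the relevant range, a second positive solution at a mountain pass level, and then upgrade the conclusion from ``a.e.\ $\rho$'' to ``every $\rho\in(0,\rho^{**})$'' by exploiting the extra rigidity coming from star-shapedness, namely the Pohozaev identity. First I would fix an arbitrary $\rho_0\in(0,\rho^{**})$ and pick an increasing sequence $\rho_n\uparrow\rho_0$ with $\rho_n\in Q_0$ (possible by item~(1) of Theorem~\ref{Th:mp2}), together with the corresponding mountain pass solutions $(U_n,\lambda_n)$ of \eqref{eq:main} at mass $\rho_n$. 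The monotonicity trick framework should give that the mountain pass levels $c(\rho_n)$ are nonincreasing in $\rho$ (the map $\rho\mapsto c(\rho)$ is monotone because enlarging the mass constraint can only lower the mountain pass energy, after the standard rescaling $U\mapsto(\rho/\rho_n)U$), hence $E(U_n)=c(\rho_n)$ stays bounded, and in fact converges to $\limsup_n c(\rho)\le c(\rho_0^-)$.

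The core of the argument is then a compactness/blow-up analysis of the sequence $(U_n,\lambda_n)$. The energy bound plus the Nehari-type relation (testing the equation with $U_n$) bounds $\|\nabla U_n\|_{L^2}$ and $\|U_n\|_{L^{2^*}}$, and since $\|U_n\|_{L^2}^2=\rho_n^2\to\rho_0^2$ is bounded, up to a subsequence $U_n\rightharpoonup U$ in $H^1_0(\Omega)$, $\lambda_n\to\lambda$, with $(U,\lambda)$ solving $-\Delta U=\lambda U+|U|^{2^*-2}U$ and $\int_\Omega U^2\le\rho_0^2$. The difference $U_n-U$ carries a possible bubble, and by the Struwe--Br\'ezis--Lieb decomposition the energy splits as $E(U_n)=E(U)+\frac1N\|\nabla(U_n-U)\|_{L^2}^2+o(1)$, i.e.\ the lost mass of the gradient concentrates as a standard Aubin--Talenti bubble contributing at least the critical threshold $\frac1N S^{N/2}$ to the level. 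Here the Pohozaev identity for $U$ on the star-shaped domain $\Omega$ is decisive: combined with the equation it forces $\lambda\int_\Omega U^2\ge 0$ with equality only if $U\equiv 0$, or more precisely it gives a lower bound on $E(U)$ (respectively rules out $U\equiv 0$ unless $\lambda\le 0$, which is incompatible with $\rho>0$ and the eigenvalue constraint) — this is exactly where the dimensional condition and \eqref{eq:BRBR} enter, as in the Br\'ezis--Nirenberg analysis. The strategy is to show that the value $c(\rho_0^-)$ is strictly below the first energy level at which a bubble can split off, so no bubbling occurs, $U_n\to U$ strongly in $H^1_0(\Omega)$, $\int_\Omega U^2=\rho_0^2$, and $(U,\lambda)$ is the desired solution at mass $\rho_0$; positivity follows by the usual argument (working with $|U_n|$ and the strong maximum principle), and that it sits at a mountain pass level follows from $E(U)=\lim c(\rho_n)=c(\rho_0)$ together with continuity of the mountain pass level under the strong convergence just established.

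I expect the main obstacle to be precisely the energy estimate that excludes bubbling, i.e.\ proving the strict inequality
\[
c(\rho_0)\;<\;E(U_\rho^{\mathrm{loc}})+\frac1N S^{N/2}
\]
uniformly as $\rho\to\rho_0$ (or an analogous inequality against $0+\frac1N S^{N/2}$ when the weak limit vanishes), which is the normalized-constraint analogue of the classical Br\'ezis--Nirenberg threshold computation. Getting a good enough upper bound on the mountain pass level $c(\rho)$ requires a careful choice of test path on $M_\rho$, built by truncating and rescaling Aubin--Talenti bubbles centered at an interior point (a ball of radius $R_\Omega$), and then correcting the $L^2$-norm; the condition \eqref{eq:BRBR} in dimension $N=3$ is exactly what makes this computation succeed, because in low dimension the bubble's $L^2$-tail is heavier and the linear term's contribution must beat the residual positive term, which is controlled by $\lambda_1(\Omega)$ versus $\lambda_1(B_{R_\Omega})$. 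A secondary technical point is ensuring the monotonicity and left-continuity properties of $\rho\mapsto c(\rho)$ with enough uniformity to pass to the limit along $\rho_n\uparrow\rho_0$; this is routine given the rescaling invariance of the nonlinearity but must be stated carefully.
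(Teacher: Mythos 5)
Your overall strategy --- exploit the Pohozaev identity on the star-shaped domain to control the sign of the Lagrange multiplier and rule out bubbling, then upgrade from a.e.\ $\rho$ to every $\rho$ by approximation using left-continuity of the mountain pass level --- is the right one and matches the paper's in spirit. However, there is a genuine gap in your approximation step: you propose to pick $\rho_n\uparrow\rho_0$ with $\rho_n\in Q_0$, citing item~(1) of Theorem~\ref{Th:mp2}. But item~(1) only guarantees $|Q_0\cap(0,\rho)|>0$ for every $\rho\le\rho^{**}$; it does \emph{not} imply that an arbitrary $\rho_0$ is a left accumulation point of $Q_0$. Nothing in Theorem~\ref{Th:mp2} excludes, say, $Q_0\subset(0,\rho^{**}/2)$, which satisfies item~(1) yet leaves every $\rho_0>\rho^{**}/2$ unreachable by increasing sequences in $Q_0$. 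The paper sidesteps this by \emph{not} starting from $Q_0$: it works instead with the \emph{full-measure} set $F$ of those $\mu$ for which the monotonicity trick yields a bounded Palais--Smale sequence satisfying \eqref{bound:u:mp} and \eqref{eq:weak_thm2}, and proves (Lemma~\ref{lem:strongconv_final}) strong convergence for \emph{every} $\mu\in F$: the Pohozaev lower bound $E_\mu(u^0)\ge\frac1N\lambda_1(\Omega)$ (Lemma~\ref{lem:conseq_Poho}), plugged into the splitting $c_\mu\ge E_\mu(u^0)+\frac1NS^{N/2}\mu^{1-N/2}$ of Corollary~\ref{cor:split}, contradicts the sharp upper bound on $c_\mu$ from Lemma~\ref{energy:mp}, \emph{without} any recourse to the derivative estimate on $c_\mu'$ that defines $Q_0$. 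That is precisely what star-shapedness buys: it replaces the fragile positive-measure set $Q_0$ by the full-measure (hence left-dense) set $F$, and only then does the approximation argument go through.

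Two smaller imprecisions. You claim the energy bound plus the Nehari identity alone bound $\|\nabla U_n\|_{L^2}$; in fact, for a solution one has $E_\mu(u)=\frac1N\|\nabla u\|_2^2+\frac{\lambda}{2^*}$, so the gradient bound needs $\lambda\ge0$, i.e.\ the Pohozaev sign condition also enters here (item~(3) of Lemma~\ref{lem:conseq_Poho}), not only at the bubbling threshold. Also, the vanishing alternative $U\equiv0$ you consider cannot occur: the compact embedding $H^1_0(\Omega)\hookrightarrow L^2(\Omega)$ forces $\|u^0\|_2=1$ along any bounded Palais--Smale sequence on $M$, so the weak limit is automatically a nontrivial (and, by almost-positivity of the sequence, positive) solution. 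Finally, the monotonicity of $\rho\mapsto c(\rho)$ is not obtained by the rescaling $U\mapsto(\rho/\rho_n)U$, which dilates the two terms of $E$ by different powers; the paper works in the variable $\mu=\rho^{2^*-2}$, where $E_\mu(u)$ is manifestly decreasing in $\mu$ for fixed $u$, and what is actually used is the left-continuity of $\mu\mapsto c_\mu$ established in Lemma~\ref{JJ:theo}.
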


Some remarks are in order.

\begin{remark}\label{rmk:BRR}
Assumption \eqref{eq:BRBR} is satisfied, in particular, in case $\Omega\subset B_{R'}$, with
$R'<\frac{2\sqrt{3}}{3}R_\Omega$. In particular, Theorem \ref{Th:mp} holds true in case $\Omega$ 
is a ball.
\end{remark}

\begin{remark}\label{rmk:constants}
Although Theorem \ref{Th:mini} is not new, we will provide a very short alternative proof of it, 
for the sake of completeness and also because all its ingredients are needed also to prove 
the existence of mountain pass solutions. In particular, we will see that in Theorem \ref{Th:mini} 
one can take 
\begin{equation}\label{eq:rho}
\rho^{*}=\left(\frac{2S^{N/2}}{N\lambda_{1}(\Omega)}\right)^{1/2},
\end{equation}
where $S$ denotes the best constant of the Sobolev embedding 
\eqref{ineq:sobolev}. Also $\rho^{**}$ can be estimated explicitly, keeping track of 
the details along the proofs of Lemma \ref{energy:mp} and Section \ref{sec:proof_main_res}.
\end{remark}

\begin{remark}\label{rmk:normvslevinstrongconv}
One of the main difficulties in the proof of both Theorem \ref{Th:mp2} and \ref{Th:mp} is 
to show the strong convergence of the bounded Palais-Smale sequence at the mountain pass level, 
which exists for almost every $\rho\in(0,\rho^{**})$ by the Struwe monotonicity trick. This 
is done by contradiction, using the well-known bubbling description for the Palais-Smale sequences 
of the Brezis-Nirenberg problem, although some adaptations are needed to work with normalized problems.  
In this respect, our two theorems exploit distinct features. 

In Theorem \ref{Th:mp}, the contradiction is obtained by sharp estimates of the energy level along 
the Palais-Smale sequence and at its weak limit, using that every solution $(u,\lambda)$ of \eqref{eq:main} with $u>0$ 
satisfies $\lambda>0$; for this we require $\Omega$ to be star-shaped, and in turn this provides a 
mountain pass solution for almost every $\rho$; finally, solutions for every $\rho\in(0,\rho^{**})$ are 
obtained by approximation, using that the mountain pass level is continuous from the left.

On the other hand, the contradiction in Theorem \ref{Th:mp2} relies on estimates on the $H^1_0$-bound 
of the Palais-Smale sequence, which is based, for a.e. $\rho$, on the (unknown) derivative of the 
mountain pass level with respect to $\rho$. To estimate such derivative we exploit once again sharp 
estimates of the mountain pass level. This is done in our key Lemma \ref{monoty}, which is inspired 
by an original version of the monotonicity trick by Struwe \cite[Lemma 2]{MR1245097}. Since we 
can compare the derivatives of ordered functions only on a set of non-vanishing measure, we obtain 
existence of the mountain pass solution only in the set $Q_0$, described above, and the extension 
to $\overline{Q}_0$ exploits once again the left-continuity of the mountain pass level.
\end{remark}

\begin{remark}\label{rmk:posit}
We stress that in our results we obtain multiplicity of \emph{positive} solutions. For general  
mountain pass constructions, a well-known quantitative argument of Ghoussoub \cite{MR1251958}
allows to localize the Palais-Smale sequences; in particular, if the functional is even, this allows 
the construction of positive mountain pass solutions. On the other hand, the combination of 
this argument with the Struwe monotonicity trick is not completely straightforward, and one 
needs some care to fill in the details, see Lemmas \ref{closesequence} and \ref{JJ:theo} below. 
\end{remark}

\begin{remark}\label{rmk:more_gen}
It is natural to wonder if Theorem \ref{Th:mp} can be extended to more general domains. We 
conjecture that this is the case, although this remains an open problem. 

At least, in view of Remark \ref{rmk:normvslevinstrongconv}, we can relax the assumption 
of $\Omega$ star-shaped by requiring that $\Omega$ is such that \eqref{eq:main} does not admit 
positive solutions for $\lambda<0$. Actually, this is not true for every domain, as it fails 
e.g. in annuli.
 
Some further suggestions come from the recent paper \cite{JJS_absrtact}, which contains an abstract 
version of the monotonicity trick carrying information on the Morse index. In view of this, it may be 
possible to further weaken the assumptions of Theorem \ref{Th:mp}, only requiring that $\Omega$ 
is such that \eqref{eq:main} does not admit positive solutions having Morse index at most one 
(in $M_\rho$) for $\lambda<0$. The use of the Morse index to control bubbling phenomena in the context 
of normalized solutions was already exploited in \cite{ntvAnPDE}, to prove the existence of normalized 
unstable solutions.
\end{remark}

\begin{remark}\label{rmk:mfg_crit}
In the context of normalized solutions for ergodic Mean Field Games, a variational approach exploited 
in \cite{CCV23} shows that also in this framework, in the Sobolev critical case, the problem admits a 
mountain pass geometry and a local minimizer. The existence of a mountain pass solution remains an open problem, though.
\end{remark}

The paper is structured as follows. In Section \ref{sec:prel}, after a change of variable to move the parameter from the constraint to the equation (and the energy), we collect some preliminary results and 
tools, notably the versions we need of the monotonicity trick and of the analysis of the failure of the 
Palais-Smale property. As we mentioned, at the end of such section we fill the gap in the proof of 
\cite[Proposition 4.4]{MR3689156}. In Section \ref{sec:MP} we analyze the mountain pass geometry 
of our Sobolev critical variational problem. Finally, in Section \ref{sec:proof_main_res} we provide the proofs of the 
three theorems stated in this introduction.

\section{Notation and preliminary results}\label{sec:prel}

In this section, we give some notations and preliminary results which will be used in the proofs of our main results. 

In this paper, for a bounded domain $\Omega\subset \R^N$, $N\ge3$, 
we denote by $L^{p}(\Omega)$ the Lebesgue space with norm $\|\cdot\|_{p}$ and by
$H^{1}_{0}(\Omega)$ the usual Sobolev space with the norm $\|\cdot\|_{H^{1}_{0}}=\|\nabla \cdot\|_{2}$, while $D^{1,2}(\R^N)$ stands for the homogenous Sobolev space with norm
$\|\nabla \cdot\|_{2}$. Denoting with $S$ the best Sobolev constant of the embedding $D^{1,2}(\R^N)
\hookrightarrow L^{2^{*}}(\R^N)$, we have
\begin{equation}\label{ineq:sobolev}
S=\inf\limits_{u\in H^{1}_{0}(\Omega)\setminus \{0\}}\frac{\|\nabla u\|_2^{2}}{\|u\|_{2^{*}}^2}=\min\limits_{u\in D^{1,2}(\R^N)\setminus \{0\}}\frac{\|\nabla u\|_{2}^{2}}{\|u\|_{2^{*}}^2}.
\end{equation}

Turning to our problem, for convenience of calculation we  apply the transformation
\begin{equation}\label{eq:changeofvariables}
u=\frac{1}{\rho}U,\qquad \mu=\rho^{2^{*}-2},
\end{equation}
to convert problem \eqref{eq:main} into the following:
\begin{equation}\label{eq:main1}
\begin{cases}
-\Delta u = \lambda u + \mu |u|^{2^{*}-2}u  & \text{in }\Omega,\smallskip\\
\int_\Omega u^2\,dx = 1, \quad u=0  & \text{on }\partial\Omega,
\end{cases}
\end{equation}
so that solutions of \eqref{eq:main1} correspond to critical points of the energy functional $E_{\mu}:H^{1}_{0}(\Omega)\rightarrow \mathbb{R}$, 
\begin{equation}\label{func:main}
E_{\mu}(u)=\frac{1}{2}\int_{\Omega}|\nabla u|^{2}dx-\frac{\mu}{2^{*}}\int_{\Omega}|u|^{2^{*}}dx,
\end{equation}
on the $L^{2}$-sphere
\begin{equation}\label{manifold:1}
M=\left\{u\in H^{1}_{0}(\Omega): \int_{\Omega} u^{2}=1\right\}.
\end{equation}
Since most of the literature concerning \eqref{eq:main}, \eqref{eq:main1} is stated for non-normalized 
solutions (i.e. for given $\lambda\in\R$, without conditions on $\|u\|_2$) it is also convenient to 
introduce the action functional $I_{\lambda,\mu}:H^{1}_{0}(\Omega)\rightarrow \mathbb{R}$, 
\begin{equation}\label{func:auto}
I_{\lambda,\mu}(u)= E_\mu(u) -\frac{\lambda}{2}\int_{\Omega}|u|^{2}dx  = \frac{1}{2}\int_{\Omega}|\nabla u|^{2}dx-\frac{\lambda}{2}\int_{\Omega}|u|^{2}dx-\frac{\mu}{2^{*}}\int_{\Omega}|u|^{2^{*}}dx.
\end{equation}
In particular, since $M\subset H^1_0(\Omega)$ is an embedded Hilbert manifold, the tangential gradient 
$\nabla_M E_\mu(u) \in T_Mu$ can be identified with an element of $H^1_0(\Omega)$, so that
\begin{equation}\label{eq:def of nabla_M E}
\nabla_M E_\mu(u) = \nabla_{H^1_0} I_{\lambda,\mu}(u),
\qquad\text{where }
\lambda = \int_{\Omega}|\nabla u|^{2}dx-\mu\int_{\Omega}|u|^{2^{*}}dx.
\end{equation}

To proceed, we recall the results we need, in order to deal with candidate Mountain Pass critical levels.
Two classical tools in this direction are contained in the following two lemmas: the first one is 
essentially Theorem 4.5 in the book of Ghoussoub \cite{MR1251958}, which is needed to construct 
localized Palais-Smale sequences, in order to obtain positive solutions (we report the statement 
for the reader's convenience); the second one is an adaptation of Jeanjean's version \cite{JJ_BPS} 
of the so-called Struwe monotonicity trick (see \cite{MR970849,MR926524}).

\begin{lemma}[{\cite[Thm. 4.5, Rmk. 4.10]{MR1251958}}]\label{closesequence}
Let $M$ be a complete connected Hilbert manifold, $\varphi_*\in C^{1}(M,\mathbb{R})$ be a given 
functional, $K\subset M$ be compact and consider a nontrivial subset
\[
\Gamma\subset \{\gamma\subset M:\gamma \ \text{is compact }, \ K\subset \gamma\}
\]
which is invariant under deformations which decrease the value of $\varphi_*$ and leave $K$ fixed. 
Assume that $\bar \nu>0$ is such that 
\[
\max_{u\in K}\varphi_*(u) + \bar\nu < c_*:=
\inf\limits_{\gamma\in \Gamma}\max\limits_{u\in \gamma}\varphi_*(u).
\]
Then there exists $0<\bar\delta<\bar\nu$ such that
\begin{itemize}
\item for every $\eps^*,\delta_* \in (0,\bar\delta)$ with $\eps^*\le\delta^*$;
\item for every $\gamma_*\in\Gamma_{\eps_*}:=\{\gamma\in\Gamma: \max_{u\in \gamma}\varphi_*(u)\le 
c_* + \eps_*\}$;
\item for every closed set $F_*\subset M$ satisfying:
\begin{enumerate}
\item $F_*$ has nonempty intersection with every element of $\Gamma_{\eps_*}$, 
\item $\inf_{F_*} \varphi_* \ge c_*-\delta_*$;
\end{enumerate}
\end{itemize}
there exists $v_*\in M$ such that
\[
|\varphi_*(v_*)- c_*| + \|\nabla_M \varphi_*(v_*)\|+\dist(v_*,\gamma_*)+\dist(v_*,F_*)
\le 9\delta_* + 26 \sqrt{\delta_*}.
\]
\end{lemma}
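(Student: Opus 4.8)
Lemma \ref{closesequence} combines the statement of Theorem 4.5 in \cite{MR1251958} with the quantitative refinement of its Remark 4.10, so the plan is to recall the two ingredients behind Ghoussoub's argument — Ekeland's variational principle applied to the max-functional on the space of candidate min-max sets, and a localized quantitative deformation lemma — and to indicate how the explicit constant arises. The scheme is the classical one: start from a nearly optimal $\gamma_*$, perturb it via Ekeland into a set $\hat\gamma$ satisfying a descent inequality, and then argue by contradiction: if no approximate critical point enjoyed the four desired properties, a suitable pseudo-gradient flow would lower the maximum of $\varphi_*$ on $\hat\gamma$ by more than that inequality allows.

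\textbf{Steps 1--2 (complete metric space and Ekeland selection).} I would first replace $\Gamma$ by its closure $\overline\Gamma$ with respect to the Hausdorff distance $d_H$; since the admissible deformations fix $K$, the family $\overline\Gamma$ is again invariant and carries the same value $c_*$. Because $M$ is complete, the set of its compact subsets containing $K$ is $d_H$-complete, hence so is the closed sublevel $\overline\Gamma_{\eps_*}=\{\gamma\in\overline\Gamma:\max_\gamma\varphi_*\le c_*+\eps_*\}$, and on it $\Phi(\gamma):=\max_{u\in\gamma}\varphi_*(u)$ is continuous with $\inf_{\overline\Gamma_{\eps_*}}\Phi=c_*$. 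Applying Ekeland's variational principle to $\Phi$ on $\overline\Gamma_{\eps_*}$, starting from the given $\gamma_*\in\Gamma_{\eps_*}$, yields $\hat\gamma\in\overline\Gamma_{\eps_*}$ with
\[
\Phi(\hat\gamma)\le\Phi(\gamma_*),\qquad d_H(\hat\gamma,\gamma_*)\le\sqrt{\delta_*},\qquad
\Phi(\sigma)>\Phi(\hat\gamma)-\sqrt{\delta_*}\,d_H(\sigma,\hat\gamma)\quad\forall\,\sigma\in\overline\Gamma_{\eps_*}\setminus\{\hat\gamma\};
\]
in particular $c_*\le\Phi(\hat\gamma)\le c_*+\eps_*$, and by hypothesis $F_*$ meets $\hat\gamma$ only at points where $c_*-\delta_*\le\varphi_*\le c_*+\eps_*$.

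\textbf{Step 3 (deformation, by contradiction).} Suppose no $v_*$ with the stated properties exists; then there is a definite $\eps>0$ such that $\|\nabla_M\varphi_*(u)\|\ge\eps$ for every $u$ with $|\varphi_*(u)-c_*|$, $\dist(u,\gamma_*)$ and $\dist(u,F_*)$ all small. Using that $\max_K\varphi_*\le c_*-\bar\nu<c_*-\delta_*$, so that $K$ lies strictly below the relevant levels and can be protected by a cut-off, construct a locally Lipschitz vector field $X$ on $M$ with $\|X\|\le 1$, $\langle\nabla_M\varphi_*(u),X(u)\rangle\ge\eps/2$ on the region where descent is needed, and $X\equiv 0$ off a small neighbourhood of that region and near $K$. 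Flowing $\hat\gamma$ along $-X$ for a short time $\tau$ of order $\sqrt{\delta_*}$ produces a set $\sigma$ obtained from $\hat\gamma$ by an admissible deformation, with $d_H(\sigma,\hat\gamma)\le\tau$ and $\Phi(\sigma)\le\Phi(\hat\gamma)$, hence $\sigma\in\overline\Gamma_{\eps_*}$; the standard flow estimate then forces on $\sigma$ a decrease of $\Phi$ strictly larger than the one permitted by the Ekeland descent inequality, a contradiction. Therefore the desired $v_*$ exists, and bookkeeping the Ekeland shift $\sqrt{\delta_*}$, the width $\delta_*$ of the deformation window, and the universal pseudo-gradient constants yields exactly the bound $9\delta_*+26\sqrt{\delta_*}$.

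\textbf{Main obstacle.} The delicate point — the one that forces the use of \emph{dual sets} and of the near-optimal classes $\Gamma_{\eps_*}$ rather than all of $\Gamma$ — is the localization near $F_*$: the descent field $X$ must be supported close to $F_*$, and yet its flow must still lower $\max_{\hat\gamma}\varphi_*$, which requires that the maximum of $\varphi_*$ along a near-optimal set be carried by points close to the dual set $F_*$ and not escape to the remainder of $\hat\gamma$. Establishing this, and carrying every constant explicitly through the pseudo-gradient construction and the flow estimate, is precisely the content of \cite[Ch. 4, Thm. 4.5, Rmk. 4.10]{MR1251958}, to which we refer for the complete proof; in the sequel this lemma will be combined — with the care noted in Remark \ref{rmk:posit} — with the monotonicity trick of Lemma \ref{JJ:theo}, in order to produce \emph{positive} mountain pass solutions.
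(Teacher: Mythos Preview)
The paper does not prove this lemma: it is quoted verbatim from Ghoussoub's book \cite[Thm.~4.5, Rmk.~4.10]{MR1251958} and only the statement is reported ``for the reader's convenience''. Your sketch --- Ekeland's principle on the Hausdorff-metric space of admissible compact sets, followed by a contradiction via a localized pseudo-gradient flow --- is indeed the architecture of Ghoussoub's original argument, so in that sense your approach matches the intended source; there is simply no in-paper proof to compare it against.
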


\begin{lemma}\label{JJ:theo}
Let $E_\mu$ and $M$ be defined as in \eqref{func:main}, \eqref{manifold:1}, respectively,
and let $J\subset \mathbb{R}^{+}$ be an interval. Let us assume that there are two points
$w_{0}$, $w_{1}$ in $M$ such that, setting
\[
\Gamma=\{\gamma \in C([0,1],M), \ \gamma(0)=w_{0}, \ \gamma(1)=w_{1}\},
\]
we have, for some $\bar\nu>0$ and for every $\mu\in J$,
\[
c_{\mu}:=\inf\limits_{\gamma\in \Gamma}\max\limits_{t\in[0,1]}E_{\mu}(\gamma(t))> \max\{E_{\mu}(w_{0}),E_{\mu}(w_{1})\} + \bar \nu.
\]
Then:
\begin{enumerate}
\item for every $\mu\in J$, the map $\mu\mapsto c_\mu$ is continuous from the left;
\item for almost every $\mu\in J$, the map $\mu\mapsto c_\mu$ is differentiable, with
derivative $c'_\mu$.
\end{enumerate}
Moreover, for every $\mu$ such that $c'_\mu$ exists, there exists a sequence $(v_{n})_{n}\subset M$,
such that, as $n\to+\infty$,
\begin{enumerate}
\item\label{PS1} $E_{\mu}(v_{n})\rightarrow c_{\mu}$;
\item\label{PS2} $\|\nabla_M E_{\mu}(v_{n})\|\rightarrow 0$;
\item\label{positivejnjn} if $w_{0},w_{1}\geq0$, $\|(v_n)^{-}\|_{H^1_0}\to 0$; and
\item\label{boundjnjn} $\|\nabla v_{n}\|_{2}^{2} \le 2c_{\mu}-2c^{\prime}_{\mu}\mu
+6\mu +o_n(1)$.
\end{enumerate}
\end{lemma}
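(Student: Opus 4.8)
The plan is to run Jeanjean's version of the Struwe monotonicity trick \cite{JJ_BPS} on the Hilbert manifold $M$, working with \emph{nonnegative} competitor paths so that positivity comes essentially for free, and carrying enough quantitative information along near-optimal paths to produce the $H^1_0$-bound \ref{boundjnjn}. Throughout set $B(u):=\tfrac1{2^*}\|u\|_{2^*}^{2^*}\ge0$, so that $E_\mu(u)=\tfrac12\|\nabla u\|_2^2-\mu B(u)$, $\|\nabla u\|_2^2=2E_\mu(u)+2\mu B(u)$, and $E_\nu(u)=E_\mu(u)+(\mu-\nu)B(u)$ for $\nu<\mu$. \emph{Monotonicity and regularity of $\mu\mapsto c_\mu$.} Since $B>0$ on $M$, for every path $\gamma$ the function $\mu\mapsto\max_tE_\mu(\gamma(t))$ is nonincreasing; hence so is $\mu\mapsto c_\mu$, which is therefore differentiable a.e.\ on $J$. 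Left continuity follows by a standard comparison: fixing $\mu$ and $\varepsilon>0$ and choosing $\gamma_\varepsilon$ with $\max_tE_\mu(\gamma_\varepsilon(t))\le c_\mu+\varepsilon$, for $\nu<\mu$ one gets $c_\nu\le c_\mu+\varepsilon+(\mu-\nu)\max_tB(\gamma_\varepsilon(t))$, so $\limsup_{\nu\uparrow\mu}c_\nu\le c_\mu$, while $c_\nu\ge c_\mu$ by monotonicity.

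\emph{The bounded high-energy portion of near-optimal paths.} Fix $\mu$ where $c'_\mu$ exists, take $\mu_n\uparrow\mu$, and write $h_n:=\mu-\mu_n\downarrow0$. Replacing $\gamma$ by $|\gamma|$ (a continuous path in $M$ with unchanged energies and endpoints $|w_0|=w_0$, $|w_1|=w_1$, using $w_0,w_1\ge0$) shows that $c_\nu$ is also the infimum of $\max_tE_\nu$ over nonnegative paths, so we may pick $\gamma_n\in\Gamma$ with $\gamma_n(t)\ge0$ for all $t$ and $\max_tE_{\mu_n}(\gamma_n(t))\le c_{\mu_n}+h_n$. Let $A_n:=\{t\in[0,1]:E_\mu(\gamma_n(t))\ge c_\mu-2h_n\}$, a nonempty set (it contains a maximizer of $E_\mu\circ\gamma_n$). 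Since $c_\mu\le\max_tE_\mu(\gamma_n(t))\le\max_tE_{\mu_n}(\gamma_n(t))\le c_{\mu_n}+h_n\to c_\mu$ by left continuity, $E_\mu(\gamma_n(t))\to c_\mu$ uniformly for $t\in A_n$; and from $h_nB(\gamma_n(t))=E_{\mu_n}(\gamma_n(t))-E_\mu(\gamma_n(t))$ we get, for $t\in A_n$,
\[
B(\gamma_n(t))\le\frac{(c_{\mu_n}+h_n)-(c_\mu-2h_n)}{h_n}=\frac{c_{\mu_n}-c_\mu}{h_n}+3\longrightarrow-c'_\mu+3 ,
\]
because $\tfrac{c_{\mu_n}-c_\mu}{h_n}=-\tfrac{c_{\mu_n}-c_\mu}{\mu_n-\mu}\to-c'_\mu$. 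Hence $\sup_{t\in A_n}\|\nabla\gamma_n(t)\|_2^2\le 2c_\mu-2c'_\mu\mu+6\mu+o_n(1)$; in particular the compact sets $\{\gamma_n(t):t\in A_n\}$ lie, for $n$ large, in one fixed bounded ball of $H^1_0(\Omega)$, on which $\nabla_M E_\mu$ is uniformly continuous.

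\emph{Extraction of the Palais--Smale sequence, and the main difficulty.} It suffices to show that, along a subsequence, $\inf_{t\in A_n}\|\nabla_M E_\mu(\gamma_n(t))\|\to0$: choosing $t_n\in A_n$ realizing this up to $o_n(1)$ and setting $v_n:=\gamma_n(t_n)$, one gets $E_\mu(v_n)\to c_\mu$ (item \ref{PS1}), $\|\nabla_M E_\mu(v_n)\|\to0$ (item \ref{PS2}), $(v_n)^-=0$ because $\gamma_n\ge0$ (item \ref{positivejnjn}), and $\|\nabla v_n\|_2^2\le 2c_\mu-2c'_\mu\mu+6\mu+o_n(1)$ by the previous step (item \ref{boundjnjn}); when $w_0$ or $w_1$ is not nonnegative one simply drops the passage to $|\gamma|$ and item \ref{positivejnjn}. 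To prove the claim, argue by contradiction: if $\|\nabla_M E_\mu(\gamma_n(t))\|\ge\alpha>0$ for all $t\in A_n$ along a subsequence, a localized deformation on $M$ along a negative pseudo-gradient — of unit speed on a neighborhood of $\{\gamma_n(t):t\in A_n\}$ and cut off to vanish where $E_\mu\le c_\mu-2h_n$ (which, for $n$ large, contains $w_0,w_1$, since $E_\mu(w_0),E_\mu(w_1)\le c_\mu-\bar\nu$) — carries $\gamma_n$ to some $\tilde\gamma_n\in\Gamma$ with $\max_tE_\mu(\tilde\gamma_n(t))<c_\mu$ for $n$ large: points of $A_n$ are pushed down by a fixed amount because, by the uniform continuity of $\nabla_M E_\mu$ on the ball above, $\|\nabla_M E_\mu\|$ stays $\ge\alpha/2$ along a short flow, while $\max_tE_\mu(\gamma_n(t))-c_\mu\to0$; points off $A_n$ already satisfy $E_\mu<c_\mu-2h_n$ and do not increase. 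This contradicts $c_\mu=\inf_\Gamma\max_tE_\mu$, proving the claim. The same step can be phrased as an application of the quantitative deformation Lemma \ref{closesequence} with $\gamma_*=\gamma_n$ and $F_*=\{E_\mu\ge c_\mu-\delta_n\}$, $\delta_n\downarrow0$.

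I expect the real work of the proof to be exactly this deformation step, carried out rigorously on the Hilbert manifold $M$: constructing the cut-off pseudo-gradient field, keeping $w_0,w_1$ fixed and $E_\mu$ nonincreasing along the flow, and — crucially, if one routes through Lemma \ref{closesequence} rather than deforming the path directly — making sure that the almost-critical point it produces is $H^1_0$-close not merely to $\gamma_n$ but to its \emph{bounded} high-energy part $\{\gamma_n(t):t\in A_n\}$; since $E_\mu$ is not uniformly continuous, the deformation slack (of order $\sqrt{\delta_n}$ in Lemma \ref{closesequence}) must be reconciled with the $O(h_n)$-width of the admissible energy band, which is why I would rather extract $v_n$ directly on the path. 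Keeping track of $h_n$, of $c_{\mu_n}-c_\mu$ and of the Lagrange multiplier so as to obtain the exact constant $6\mu$, and carrying the nonnegativity through the whole construction (cf.\ Remark \ref{rmk:posit}), is the delicate part.
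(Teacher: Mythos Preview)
Your proposal is correct, and the overall architecture---monotonicity of $\mu\mapsto c_\mu$, nonnegative near-optimal paths, and the difference-quotient bound $B(\gamma_n(t))\le -c'_\mu+3+o_n(1)$ on the high-energy set $A_n$---matches the paper exactly. The implementation of the Palais--Smale extraction, however, differs in a genuine way.

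The paper does \emph{not} deform for $E_\mu$ directly. Instead it applies Lemma~\ref{closesequence} to the \emph{approximating} functionals $\varphi_*=E_{\mu_n}$, with $\eps_n=\mu-\mu_n$ and the localizing set
\[
F_n=\overline{\{\,u=\tilde\gamma(\tilde t):\tilde\gamma\in\Gamma_{\eps_n}\text{ for }E_{\mu_n},\ E_\mu(u)\ge c_\mu-\eps_n\,\}}.
\]
The point of this two-functional choice is exactly the one you flag: the naive $F_*=\{E_\mu\ge c_\mu-\delta_n\}$ is \emph{unbounded}, so closeness to it says nothing; but any $u\in F_n$ satisfies simultaneously $E_{\mu_n}(u)\le c_{\mu_n}+\eps_n$ and $E_\mu(u)\ge c_\mu-\eps_n$, whence $B(u)\le -c'_\mu+3+o(1)$ and $F_n$ is bounded. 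Ghoussoub's lemma then produces $v_n$ with $\dist(v_n,F_n)\to0$ and $\dist(v_n,|\gamma_n|)\to0$, and the bound \ref{boundjnjn} and the almost-positivity \ref{positivejnjn} are read off from these two pieces of closeness; finally $(v_n)$ is a PS sequence for $E_\mu$ because $\mu_n\to\mu$ and $(v_n)$ is bounded.

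Your route avoids both the approximating functional and the delicate choice of $F_n$: you take $v_n=\gamma_n(t_n)$ \emph{on} the path, so positivity is exact ($(v_n)^-\equiv0$, stronger than the paper's $\|(v_n)^-\|_{H^1_0}\to0$) and the $H^1_0$-bound is immediate from the $A_n$ estimate. The price is that the deformation has to be carried out by hand; your sketch is correct, and the ingredient you isolate---Lipschitz continuity of $\nabla_M E_\mu$ on bounded sets of $M$, which holds because $u\mapsto |u|^{2^*-2}u$ is Lipschitz from bounded sets of $L^{2^*}$ to $L^{(2^*)'}$---is exactly what makes the quantitative deformation lemma apply with parameters independent of $n$, yielding $\max_t E_\mu(\tilde\gamma_n(t))<c_\mu$ for $n$ large. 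Both approaches arrive at the same constant $6\mu$ in \ref{boundjnjn}, via the same computation $\|\nabla u\|_2^2=2E_\mu(u)+2\mu B(u)\le 2c_\mu+2\mu(-c'_\mu+3)+o_n(1)$.
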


\begin{proof}
Except for property \ref{positivejnjn}, this lemma can be proved with minor modifications as in the 
paper by Jeanjean, \cite[Prop. 2.1, Lemma 2.3]{JJ_BPS}, which was set in a Hilbert space (see also 
\cite{JJS_absrtact}). We sketch such proof here because the almost positivity in \ref{positivejnjn} 
requires some care in the application of Lemma \ref{closesequence}, and also because 
we need the explicit bound provided in \ref{boundjnjn}. More precisely, we will not apply Lemma 
\ref{closesequence} to a fixed functional $E_\mu$, but to a sequence of approximating functionals.

The starting point is that, for every fixed $u\in M$, the map $J\ni\mu\mapsto E_\mu(u)$ is decreasing. 
From this, arguing as in \cite[Lemma 2.3]{JJ_BPS}, one infers that the map $\mu\mapsto c_\mu$ is  
continuous from the left on $J$; moreover, such map is non-increasing, and thus differentiable for 
almost every $\mu\in J$. In the following, we fix a differentiability point $\mu$ and we 
consider a strictly increasing sequence $(\mu_{n})_{n}\subset J$ such that $\mu_{n}\rightarrow \mu$. 

To construct the required sequence $(v_n)_n$ we will apply (infinitely many times) Lemma \ref{closesequence}, with $K=\{w_0,w_1\}$, $\Gamma$ as above, 
\[
\varphi_*=E_{\mu_n},\quad c_*=c_{\mu_n},\quad \eps_*=\eps_n := \mu - \mu_n,\quad \delta_*=\delta_n := c_{\mu_n} - c_\mu + \mu - \mu_n\searrow 0,
\]
as $n\to+\infty$, by left continuity of the map $\mu\mapsto c_\mu$; in particular,  
$\bar\delta$ can be fixed only depending on $J$ and $\bar\nu$, so that, for $n$ large, $0<\eps_n<\delta_n<\bar\delta$. With this choice, we have
\begin{equation*}
\Gamma_{\eps_n} := \{\gamma \in\Gamma : \max_{t}E_{\mu_n}(\gamma(t)) \le c_{\mu_n}+\mu - \mu_n \}\end{equation*}
and, in case $w_{0},w_{1}\geq0$, we can choose $\gamma_*=\gamma_n$ as a path of nonnegative 
functions: indeed, in such a case $|w_{0}|=w_{0}$ and $|w_{1}|=w_{1}$ and, since $E_{\mu}$ and $M$ are 
even, $\gamma_{n}\in\Gamma_n$ implies $|\gamma_{n}|\in\Gamma_n$, with the same value of $E_\mu$. 
Finally, we choose
\[
F_*=F_n:=\overline{\left\{u \in M : 
\begin{array}{c}
u = \tilde\gamma(\tilde t),\text{ for some }\tilde\gamma\in\Gamma_{\eps_n},\ \tilde t\in(0,1), \text{ and }\\
 E_{\mu}(u) \ge c_\mu-\eps_n 
\end{array}
\right\}}
\]
so that the first assumption about $F_n$ in Lemma \ref{closesequence} is trivially satisfied, while
\[
\inf_{F_*}\varphi_*=\inf_{F_n}E_{\mu_n} \ge \inf_{F_n}E_{\mu} \ge  c_\mu-\eps_n = c_{\mu_n} - \delta_n.
\]

As a consequence, all the assumptions of Lemma \ref{closesequence} are satisfied and 
we infer the existence of a sequence $(v_{n})_{n}$ such that, as $n\to+\infty$,
\begin{equation}\label{eq:prePS_n}
E_{\mu_n}(v_{n})\to c_{\mu}, \ \ \ \|\nabla_M E_{\mu_n}(v_{n})\|\to 0 \ \ \ \text{and   } \ \ \|v_{n}-|\gamma_{n}(t_{n})|\| \to 0 \ \ \text{for some } t_{n}\in[0,1],
\end{equation}
and in particular \ref{positivejnjn} follows by the choice of $(\gamma_n)_n$. Moreover, 
the sequence $(v_n)_n$ also satisfies, as $n\to+\infty$,
\begin{equation}\label{eq:near}
\dist(v_{n},u_{n})\to 0,\qquad\text{for some }u_n\in F_n.
\end{equation}
We claim that this implies \ref{boundjnjn}. Indeed, using the  
definition of $F_n$ and $\Gamma_{\eps_n}$, we can choose $u_n=\tilde\gamma_n(\tilde t_n)$, for some 
$\tilde\gamma_n\in \Gamma_{\eps_n}$, $\tilde t_{n}\in(0,1)$, so that
\begin{equation}\label{eq:near2}
\text{both } E_{\mu_n}(u_n) \le c_{\mu_n}+\mu - \mu_n\qquad
\text{and } E_{\mu}(u_n) \ge c_{\mu}-(\mu - \mu_n).
\end{equation}
But this implies
\begin{equation*}
\frac{E_{\mu_{n}}(u_n)-E_{\mu}(u_n)}{\mu-\mu_{n}}\leq \frac{c_{\mu_{n}}+(\mu-\mu_{n})-c_{\mu}+(\mu-\mu_{n})}{\mu-\mu_{n}}=\frac{c_{\mu_{n}}-c_{\mu}}{\mu-\mu_{n}}+2.
\end{equation*}
Since $c_{\mu}^{\prime}$ exists, there is $n(\mu)\in \mathbb{N}$ such that $\forall n\geq n(\mu)$
\begin{equation*}
-c_{\mu}^{\prime}-1\leq\frac{c_{\mu_{n}}-c_{\mu}}{\mu-\mu_{n}}\leq-c_{\mu}^{\prime}+1,
\end{equation*}
which implies that, for $ n\geq n(\mu)$,
\begin{equation*}
\frac{E_{\mu_{n}}(u_n)-E_{\mu}(u_n)}{\mu-\mu_{n}}\leq-c_{\mu}^{\prime}+3.
\end{equation*}
Thus,
\begin{equation*}
\frac{1}{2^{*}}\|u_n\|_{2^{*}}^{2^{*}}=\frac{E_{\mu_{n}}(u_n)-E_{\mu}(u_n)}{\mu-\mu_{n}}
\leq-c_{\mu}^{\prime}+3,
\end{equation*}
which means that
\begin{equation*}
\frac{1}{2}\|\nabla u_n\|_{2}^{2}= E_{\mu}(u_n)+\frac{\mu}{2^{*}}\|\gamma_{n}(t)\|_{2^{*}}^{2^{*}}
\leq c_{\mu_{n}}+o_n(1)+\mu(-c_{\mu}^{\prime}+3),
\end{equation*}
and finally
\begin{equation*}
\|\nabla u_n\|_{2}^{2} \le 2c_{\mu}-2c^{\prime}_{\mu}\mu+6\mu +o_n(1).
\end{equation*}
Recalling \eqref{eq:near}, we obtain that the claim follows, and $(v_n)_n$ satisfies \ref{boundjnjn}.

Finally, since
\[
E_\mu(u) = E_{\mu_n}(u) - \frac{\mu-\mu_n}{2^*}\int_{\Omega}|u|^{2^{*}}dx
\]
and $(v_n)_n$ is bounded, we infer that \eqref{eq:prePS_n} implies properties \ref{PS1} and \ref{PS2}, 
and the lemma follows.
\end{proof}
\begin{remark}\label{rmk:trick_for_p}
The content of Lemma \ref{JJ:theo} holds true also in case $E_\mu$ is replaced with the Sobolev subcritical energy functional
\begin{equation}\label{eq:subcrit_energy}
\tilde E_{\mu}(u)=\frac{1}{2}\int_{\Omega}|\nabla u|^{2}dx-\frac{\mu}{p}\int_{\Omega}|u|^{p}dx,
\end{equation}
with $2+\frac4N < p < 2^*$. In particular, if $\tilde E_\mu$ enjoys a common mountain pass geometry 
for $\mu$ in some interval $J$, then it admits a bounded Palais-Smale sequence at the mountain pass 
level for almost every $\mu\in J$.
\end{remark}

Next, to investigate the convergence of Palais-Smale sequences, we recall the following result
by Struwe for $I_{\lambda,\mu}(u)$, which is essentially a version of \cite[Ch. III, Thm. 3.1]{Struwebook}.
\begin{lemma}\label{split}
Suppose $\Omega$ is a smoothly bounded domain in $\mathbb{R}^{N}$, $N\geq3$, $\lambda\in\R$, $\mu>0$.
Let moreover $(u_{n})_{n}$ be a Palais-Smale sequence for $I_{\lambda,\mu}(u)$ in $H^{1}_{0}(\Omega)$. Then, up to a subsequence, $(u_{n})_n$ weakly converges to a function $u^{0}\in H^{1}_{0}(\Omega)$ and if the convergence is not strong then there exist an integer $k\geq1$, $k$ nontrival solutions $u^{j}\in H^{1}_{0}(\Omega)$, $1\leq j\leq k$, to the "limiting problem",
\begin{equation}\label{pro:lim}
-\Delta u = \mu |u|^{2^{*}-2}u \ \ \ \ \mbox{in } \mathbb{R}^{N}
\end{equation}
such that
\[
\left\|u_{n}-\left(u^{0}+\sum\limits_{j=1}\limits^{k}u^{j}_{n}\right)\right\|_{D^{1,2}(\R^N)}\to0,
\]
as $n\to+\infty$, where
\[u^{j}_{n}=(R_{n}^{j})^{\frac{N-2}{2}}u^{j}(R_{n}^{j}(x-x_{n}^{j})) \ \ \ 1\leq j \leq k, \ n\in \N,\]
for suitable points $x_{n}^{j}\in \Omega$ and positive parameters $R_{n}^{j}\rightarrow+\infty$ as $n\rightarrow+\infty$. Moreover, we have
\[
\begin{split}
\|\nabla u_{n}\|_{2}^{2}&=\|\nabla u^{0}\|_{2}^{2}+\sum\limits_{j=1}\limits^{k}\|\nabla u^{j}\|_{2}^{2}+o(1),\\
\| u_{n}\|_{2^*}^{2^*}&=\| u^{0}\|_{2^*}^{2^*}+\sum\limits_{j=1}\limits^{k}\| u^{j}\|_{2^*}^{2*}+o(1),\\
\| u_{n}\|_{2}^{2}&=\| u^{0}\|_{2}^{2}+o(1).
\end{split}
\]
\end{lemma}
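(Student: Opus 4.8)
The statement to prove is Lemma~\ref{split}, the profile decomposition (Struwe-type bubbling) for Palais--Smale sequences of the action functional $I_{\lambda,\mu}$.

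=== MY PROOF PLAN ===

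\textbf{Plan.} The proof follows the classical concentration-compactness/bubbling scheme of Struwe, adapted to the term $\lambda u$; the point is that the subcritical perturbation $\frac{\lambda}{2}\|u\|_2^2$ is harmless because of compactness of the embedding $H^1_0(\Omega)\hookrightarrow L^2(\Omega)$. First I would establish that any Palais--Smale sequence $(u_n)_n$ for $I_{\lambda,\mu}$ at some level $c$ is bounded in $H^1_0(\Omega)$: from $I_{\lambda,\mu}(u_n)=c+o(1)$ and $\langle I'_{\lambda,\mu}(u_n),u_n\rangle = o(1)\|u_n\|_{H^1_0}$ one gets, combining the two with coefficients that kill the $\|u\|_{2^*}^{2^*}$ term, that $\left(\tfrac12-\tfrac1{2^*}\right)\|\nabla u_n\|_2^2 \le c + o(1) + \tfrac{|\lambda|}{2}\|u_n\|_2^2 + o(1)\|u_n\|_{H^1_0}$; since $\|u_n\|_2^2\le \lambda_1(\Omega)^{-1}\|\nabla u_n\|_2^2$ absorbs into... wait, that does not absorb for $\lambda$ large, so more care is needed. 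Actually one should instead note that for a Palais--Smale sequence, if $\|u_n\|_{H^1_0}\to\infty$, normalizing $w_n = u_n/\|u_n\|_{H^1_0}$ and using $\langle I'_{\lambda,\mu}(u_n),u_n\rangle=o(\|u_n\|^2)$ leads to $\mu\|u_n\|_{2^*}^{2^*}/\|u_n\|^2 \to 1$, while the subcritical terms are lower order, and a standard argument (testing against $u_n$ versus the energy identity, or using that the Pohozaev/Nehari balance forces $\|\nabla u_n\|_2^2\to\|\nabla u_n\|_2^2$ minus a fixed quantity) gives boundedness; alternatively, one simply invokes that this boundedness is part of the statement of \cite[Ch.~III, Thm.~3.1]{Struwebook} and reproduce its short argument. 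I would then extract a weakly convergent subsequence $u_n\rightharpoonup u^0$ in $H^1_0(\Omega)$, strongly in $L^2(\Omega)$ (Rellich) and a.e., and check that $u^0$ solves $-\Delta u^0 = \lambda u^0 + \mu|u^0|^{2^*-2}u^0$ by passing to the limit in the weak formulation, using Vitali/Brezis--Lieb for the critical nonlinear term.

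\textbf{Induction step.} If $u_n\to u^0$ strongly we are done with $k=0$; otherwise set $u_n^{(1)} := u_n - u^0$, which is a Palais--Smale sequence for the \emph{limiting} functional $I^\infty_\mu(v) = \tfrac12\|\nabla v\|_2^2 - \tfrac{\mu}{2^*}\|v\|_{2^*}^{2^*}$ on $D^{1,2}(\R^N)$ (here the $\lambda u$ term disappears since $\|u_n^{(1)}\|_2\to 0$ by Rellich, and the Brezis--Lieb lemma splits both $\|\nabla\cdot\|_2^2$ and $\|\cdot\|_{2^*}^{2^*}$), with $u_n^{(1)}\rightharpoonup 0$ and energy $I^\infty_\mu(u_n^{(1)}) \to c - I_{\lambda,\mu}(u^0)$. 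Since $u_n^{(1)}\not\to 0$, a concentration function argument (choose $R_n^{(1)}$ and $x_n^{(1)}$ so that a fixed fraction of $\|\nabla u_n^{(1)}\|_2^2$ concentrates in a ball of radius $1/R_n^{(1)}$ around $x_n^{(1)}$) and the rescaling $v_n(y) = (R_n^{(1)})^{-(N-2)/2}u_n^{(1)}\big((y-x_n^{(1)})/R_n^{(1)}\big) + \text{(correction)}$ produces, after passing to the limit, a nontrivial solution $u^1$ of \eqref{pro:lim} on $\R^N$ (or on a half-space, which by Pohozaev has no nontrivial solution, forcing $R_n^{(1)}\,\dist(x_n^{(1)},\partial\Omega)\to\infty$ so the limit domain is all of $\R^N$). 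Then $u_n^{(2)} := u_n^{(1)} - (R_n^{(1)})^{(N-2)/2}u^1(R_n^{(1)}(x-x_n^{(1)}))$ is again Palais--Smale for $I^\infty_\mu$ with strictly smaller energy (by the quantization $\|\nabla u^1\|_2^2 \ge S^{N/2}/\mu^{(N-2)/2}$ each bubble carries a fixed minimal energy), $u_n^{(2)}\rightharpoonup 0$, and one iterates. The process terminates after finitely many steps $k$ because energy is nonnegative on the Nehari manifold of $I^\infty_\mu$ and drops by at least a fixed amount at each step. Tracking the Brezis--Lieb splittings through the iteration yields the three asymptotic identities for $\|\nabla u_n\|_2^2$, $\|u_n\|_{2^*}^{2^*}$ and $\|u_n\|_2^2$ (the last being trivial since $\|u_n - u^0\|_2\to 0$ and each bubble has vanishing $L^2$-norm in the limit as $R_n^j\to\infty$).

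\textbf{Main obstacle.} The delicate point is the \textbf{first bubble extraction with boundary effects}: after rescaling at $x_n^{(1)}\in\Omega$ with $R_n^{(1)}\to\infty$, the rescaled domains $R_n^{(1)}(\Omega - x_n^{(1)})$ converge either to $\R^N$ or to a half-space $\R^N_+$; one must rule out the half-space case, which follows from the Pohozaev identity (no nontrivial finite-energy solution of $-\Delta u = \mu|u|^{2^*-2}u$ on a half-space with zero Dirichlet data), hence $R_n^{(1)}\dist(x_n^{(1)},\partial\Omega)\to+\infty$ and the bubble lives on all of $\R^N$. Handling the interaction/orthogonality of bubbles at different scales (so that the $D^{1,2}$-splitting is genuinely orthogonal and the remainder really does go to zero in $D^{1,2}$) is the other bookkeeping-heavy ingredient; both are standard (going back to Struwe, Brezis--Coron, and the exposition in \cite{Struwebook}), so I would present the argument concisely, emphasizing only the modification due to the $\lambda u$ term, namely that it contributes nothing to any bubble and only to the equation satisfied by the weak limit $u^0$.
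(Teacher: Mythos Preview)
The paper does not prove this lemma at all: it is stated as a direct citation of Struwe's global compactness theorem \cite[Ch.~III, Thm.~3.1]{Struwebook}, with no argument given. Your outline \emph{is} the standard Struwe proof, and your emphasis that the $\lambda u$ term contributes only to the equation for the weak limit $u^0$ (and vanishes from every bubble because $\|u_n-u^0\|_2\to0$ by Rellich) is exactly the one adaptation needed, so you are fully aligned with what the paper invokes.

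One minor point: your boundedness paragraph is unnecessarily hesitant. The clean argument, valid for every $\lambda\in\R$, is to take
\[
I_{\lambda,\mu}(u_n)-\tfrac12\langle I'_{\lambda,\mu}(u_n),u_n\rangle=\tfrac{\mu}{N}\|u_n\|_{2^*}^{2^*}=c+o(1)+o(\|\nabla u_n\|_2),
\]
so $\|u_n\|_{2^*}^{2^*}\le C(1+\|\nabla u_n\|_2)$; then H\"older gives $\|u_n\|_2^2\le |\Omega|^{2/N}\|u_n\|_{2^*}^2\le C(1+\|\nabla u_n\|_2)^{(N-2)/N}$, and feeding both bounds into $\langle I'_{\lambda,\mu}(u_n),u_n\rangle=o(\|\nabla u_n\|_2)$ yields $\|\nabla u_n\|_2^2\le C(1+\|\nabla u_n\|_2)$, hence boundedness. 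No restriction on $\lambda$ is needed.
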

The above result concerns the case with fixed $\lambda$. Here we need the following version, which
is adapted to the normalized Palais-Smale sequences.
\begin{corollary}\label{cor:split}
Let $(u_{n})_n$ be a Palais-Smale sequence for $E_{\mu}$ on $M$:
\begin{enumerate}
\item $(u_n)_n \subset M$,
\item $E_\mu(u_n) \to c \in \R$,
\item $\|\nabla_M E_\mu(u_n)\| = \|\nabla_{H^1_0} I_{\lambda_n,\mu}(u_n)\| \to 0$,
\end{enumerate}
as $n\to+\infty$, and assume that the sequence
\begin{equation}\label{eq:lambda_def}
\lambda_n = \int_{\Omega}|\nabla u_n|^{2}dx-\mu\int_{\Omega}|u_n|^{2^{*}}dx
\end{equation}
is bounded.

Then there exist $\lambda^0\in\R$, $u^0\in M$ such that, up to subsequences,
$\lambda_n\to\lambda^0$ and either
\[
u_{n}\rightarrow u^{0} \ \ \text{in } H^{1}_{0}(\Omega)
\]
or
\[
u_{n}\rightharpoonup u^{0},\ u_{n}\not\rightarrow u^{0} \ \ \text{in } H^{1}_{0}(\Omega)
\]
with
\begin{equation}\label{eq:split_E}
\liminf E_{\mu}(u_{n})\geq E_{\mu}(u^{0})+ \frac{1}{N} S^{N/2}\mu^{1-\frac{N}{2}},
\end{equation}
and
\begin{equation}\label{eq:split_norm}
 \liminf \|\nabla u_{n}\|_{2}^{2}\geq\|\nabla u^{0}\|_{2}^{2}+S^{N/2}\mu^{1-\frac{N}{2}}.
\end{equation}
In both cases, $u^0$ is a solution of \eqref{eq:main1}, with $\lambda = \lambda^0$.
\end{corollary}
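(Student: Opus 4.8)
The plan is to reduce the constrained Palais--Smale condition to an ordinary (free) Palais--Smale condition at a fixed Lagrange multiplier, apply Struwe's splitting Lemma~\ref{split}, and finally quantify the energy and the Dirichlet norm carried by the bubbles using the equation on $\R^N$ and the Sobolev inequality \eqref{ineq:sobolev}.

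\emph{Boundedness and extraction of limits.} First I would prove that $(u_n)_n$ is bounded in $H^1_0(\Omega)$. Since $u_n\in M$ and $\mu\|u_n\|_{2^*}^{2^*}=\|\nabla u_n\|_2^2-\lambda_n$ by \eqref{eq:lambda_def}, substituting into \eqref{func:main} and using $\tfrac12-\tfrac1{2^*}=\tfrac1N$ gives
\[
E_\mu(u_n)=\frac1N\|\nabla u_n\|_2^2+\frac{\lambda_n}{2^*};
\]
as $E_\mu(u_n)\to c$ and $(\lambda_n)_n$ is bounded, $(u_n)_n$ is bounded in $H^1_0(\Omega)$. Passing to a subsequence, $u_n\rightharpoonup u^0$ in $H^1_0(\Omega)$, $u_n\to u^0$ a.e. and, by the compact embedding $H^1_0(\Omega)\hookrightarrow L^2(\Omega)$, strongly in $L^2(\Omega)$; hence $\|u^0\|_2=\lim\|u_n\|_2=1$, i.e. $u^0\in M$, and $\lambda_n\to\lambda^0$ for some $\lambda^0\in\R$ along a further subsequence.

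\emph{Reduction to a fixed multiplier.} Next I would check that $(u_n)_n$ is an (unconstrained) Palais--Smale sequence for $I_{\lambda^0,\mu}$ on $H^1_0(\Omega)$. Since $I_{\lambda^0,\mu}(u)=I_{\lambda_n,\mu}(u)+\tfrac{\lambda_n-\lambda^0}{2}\|u\|_2^2$ and $\|u_n\|_2=1$, one has $I_{\lambda^0,\mu}(u_n)=E_\mu(u_n)-\tfrac{\lambda^0}{2}\to c-\tfrac{\lambda^0}{2}$, and, recalling $\nabla_M E_\mu(u_n)=\nabla_{H^1_0} I_{\lambda_n,\mu}(u_n)$ from \eqref{eq:def of nabla_M E},
\[
\|\nabla_{H^1_0} I_{\lambda^0,\mu}(u_n)\|\le\|\nabla_M E_\mu(u_n)\|+C\,|\lambda_n-\lambda^0|\longrightarrow0,
\]
where $C$ is the embedding constant of $H^1_0(\Omega)\hookrightarrow L^2(\Omega)$. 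Letting $n\to\infty$ in the weak formulation and using the weak $H^1_0$, strong $L^2$ and a.e. convergences together with the $L^{2^*}$-bound, the weak limit $u^0$ solves $-\Delta u=\lambda^0 u+\mu|u|^{2^*-2}u$; combined with $u^0\in M$ this is exactly \eqref{eq:main1} with $\lambda=\lambda^0$.

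\emph{Applying Lemma~\ref{split} and the bubble estimates.} Applying Lemma~\ref{split} to $(u_n)_n$ with $\lambda=\lambda^0$: if the convergence is strong in $H^1_0(\Omega)$ the first alternative holds and we are done; otherwise Lemma~\ref{split} yields $k\ge1$ nontrivial solutions $u^1,\dots,u^k$ of \eqref{pro:lim} with $\|\nabla u_n\|_2^2=\|\nabla u^0\|_2^2+\sum_{j}\|\nabla u^j\|_2^2+o(1)$ and $\|u_n\|_{2^*}^{2^*}=\|u^0\|_{2^*}^{2^*}+\sum_{j}\|u^j\|_{2^*}^{2^*}+o(1)$. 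Testing \eqref{pro:lim} with $u^j$ gives $\|\nabla u^j\|_2^2=\mu\|u^j\|_{2^*}^{2^*}$, and then \eqref{ineq:sobolev} gives $\|u^j\|_{2^*}^{2^*-2}\ge S/\mu$; since $2^*/(2^*-2)=N/2$, this yields $\|u^j\|_{2^*}^{2^*}\ge(S/\mu)^{N/2}$ and $\|\nabla u^j\|_2^2\ge S^{N/2}\mu^{1-N/2}$. Keeping only the term $j=1$ in the Dirichlet splitting gives \eqref{eq:split_norm}; for the energy, using $\|\nabla u^j\|_2^2=\mu\|u^j\|_{2^*}^{2^*}$ and $\tfrac12-\tfrac1{2^*}=\tfrac1N$ one finds $E_\mu(u_n)=E_\mu(u^0)+\tfrac{\mu}{N}\sum_{j}\|u^j\|_{2^*}^{2^*}+o(1)$, and keeping only $j=1$ gives \eqref{eq:split_E}. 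The only genuinely delicate step is the reduction to a fixed multiplier: one must exploit both the a priori boundedness of $(\lambda_n)_n$ and the normalization $\|u_n\|_2=1$ to land inside the hypotheses of Lemma~\ref{split}; the remaining ingredients (the $H^1_0$-bound and the bubble estimates) are routine manipulations of \eqref{ineq:sobolev} and of the limiting equation.
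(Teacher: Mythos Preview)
Your proof is correct and follows essentially the same approach as the paper: reduce to a Palais--Smale sequence for $I_{\lambda^0,\mu}$ at a fixed multiplier via the identity $I_{\lambda^0,\mu}=I_{\lambda_n,\mu}+\tfrac{\lambda_n-\lambda^0}{2}\|\cdot\|_2^2$, apply Lemma~\ref{split}, and read off the bubble contributions using the Sobolev inequality. You supply more detail than the paper (the explicit $H^1_0$-bound via $E_\mu(u_n)=\tfrac1N\|\nabla u_n\|_2^2+\tfrac{\lambda_n}{2^*}$, and the derivation of the bubble lower bounds), but the strategy is identical.
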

\begin{proof}
We notice that, for every $u\in H^1_0(\Omega)$,
\[
I_{\lambda^0,\mu} (u) = I_{\lambda_n,\mu} (u) + \frac{\lambda^0-\lambda_n}{2}\int_{\Omega}|u|^{2}dx.
\]
Since $\lambda_n \to \lambda^0$, we deduce that $(u_n)_n$ is a Palais-Smale sequence for
$I_{\lambda^0,\mu}$ in $H^1_0(\Omega)$. Then, we can apply Lemma \ref{split} and conclude, using the
compact embedding of $H^1_0(\Omega)$ into $L^2(\Omega)$, and recalling that every solution $v$
of \eqref{pro:lim} satisfies
\[
E_{\mu}(v) = \frac{1}{N} \|\nabla v\|^{2}_{L^2(\R^N)} \ge \frac{1}{N} S^{N/2}\mu^{1-\frac{N}{2}}
\]
(see e.g. \cite[Ch. III, Rmk. 3.2, $(3^\circ)$]{Struwebook}.)
\end{proof}

Finally, we will use repeatedly the following elementary lemma. 
\begin{lemma}\label{fmax}Define $f(s)=\dfrac{1}{2}As-\dfrac{1}{2^{*}}Bs^{\frac{2^{*}}{2}}$, let $s,\bar{s}\in \mathbb{R}^{+}$ satisfy $\max\limits_{s>0}f(s)=f(\bar{s})$. Then,
\[\max\limits_{s>0}f(s)=f(\bar{s})= \frac{1}{N}\frac{A^{\frac{N}{2}}}{B^{\frac{N}{2}-1}}\]
where $\bar{s}=\left(\dfrac{A}{B}\right)^{\frac{N}{2}-1}$.
\end{lemma}
\begin{proof}
By calculation, we obtain
\[f^{\prime}(s)= \frac{1}{2}A-\frac{1}{2}Bs^{\frac{2^{*}}{2}-1}.\]

Therefore, let $f^{\prime}(s)=0$, we have $s^{\prime}=(\frac{A}{B})^{\frac{N}{2}-1}$. If $0<s<s^{\prime}$, we have $f^{\prime}(s)>0$. If $0<s^{\prime}<s$, we have $f^{\prime}(s)<0$. Thus, $s^{\prime}=\bar{s}$ and
\begin{equation*}
f(\bar{s})=\frac{1}{2}A\bar{s}-\frac{1}{2^{*}}B\bar{s}^{\frac{2^{*}}{2}}= \frac{1}{2}\frac{A^{\frac{N}{2}}}{B^{\frac{N}{2}-1}}-\frac{1}{2^{*}}\frac{A^{\frac{N}{2}}}{B^{\frac{N}{2}-1}}
= \frac{1}{N}\frac{A^{\frac{N}{2}}}{B^{\frac{N}{2}-1}}. \qedhere
\end{equation*}
\end{proof}

To conclude this section, we can fill the gap in the mountain pass result stated in \cite{MR3689156}.
\begin{proof}[Proof of {\cite[Proposition 4.4]{MR3689156}} completed]
It was already shown in \cite{MR3689156} that $\tilde E_\mu$, defined as in \eqref{eq:subcrit_energy}, 
has the mountain pass geometry for $\mu\in(0,\mu^*)$, for a suitable $\mu^*$. Then Remark 
\ref{rmk:trick_for_p} applies and we obtain the existence of a bounded Palais-Smale sequence $(v_n)_n$ 
for $\tilde E_\mu$ on $M$, at the mountain pass level $c_\mu$, for every $\mu\in F$, where $F\subset (0,\mu^*)$ is such that $(0,\mu^*)\setminus F$ is negligible. Then $v_n \rightharpoonup \bar v$ weakly in $H^1_0(\Omega)$ and, since $p<2^*$, we obtain, up to subsequences
\[
\|v_n\|_2 \to \|\bar v\|_2,\qquad
\|v_n\|_p \to \|\bar v\|_p,\qquad
\|\nabla v_n\|_2^2 - \mu\|v_n\|_p^p =:\lambda_n \to \bar \lambda \in\R.
\]
Since $\nabla_M\tilde E_\mu(v_n) \to 0$, we infer that $\|\nabla v_n\|_2 \to \|\nabla \bar v\|_2$, 
and thus $v_n \to \bar v = \bar v_\mu$ strongly in $H^1_0(\Omega)$, where  $\bar v_\mu$ is a 
normalized solution at the mountain pass level $c_\mu$, for every $\mu\in F$. Finally, let 
$\hat\mu\in (0,\mu^*)\setminus F$, and $(\mu_k)_k\subset F$, with $\mu_k\to\hat\mu$. Although 
in principle $(\bar v_{\mu_k})_k$ does not need to be bounded, on the other hand, it has Morse index 
uniformly bounded (it is made of mountain pass solutions) and therefore the blow-up analysis contained 
in \cite[Section 2]{MR3689156} applies, in particular \cite[Corollary 2.15]{MR3689156}; then, since 
$\mu_k\to\hat\mu\neq 0$, we deduce that $(\bar v_{\mu_k})_k$ is bounded in $H^1_0(\Omega)$. But then we 
can repeat the argument above, obtaining that $\bar v_{\mu_k} \to \bar v_{\hat\mu}$ strongly in 
$H^1_0(\Omega)$, and  $\bar v_{\hat\mu}$ is a 
normalized solution at the mountain pass level $c_{\hat\mu}$, concluding the proof.
\end{proof}

\section{Mountain pass structure}\label{sec:MP}
In order to investigate the geometric structure of $E_{\mu}$, we define the sets
\begin{equation}\label{sets:geo}
\mathcal{B}_{\alpha}=\left\{u\in M : \int_{\Omega}|\nabla u|^{2}dx<\alpha\right\}, \ \ \ \ \ \ \mathcal{U}_{\alpha}=\left\{u\in M : \int_{\Omega}|\nabla u|^{2}dx=\alpha\right\}.
\end{equation}
It is well known that $\Bcal_\alpha$ is non-empty if and only if
$\alpha\ge\lambda_{1}(\Omega)$, in which case
\begin{equation}\label{eq:ph1}
\varphi_1 \in \mathcal{B}_{\alpha},
\end{equation}
where $\lambda_{1}(\Omega)$ is the first eigenvalue and $\varphi_{1}>0$,
$\|\varphi_{1}\|_{2}^{2}=1$ is the first eigenfunction of the problem
\begin{equation}\label{eq:ph}
\begin{cases}
-\Delta \varphi_{1} = \lambda_{1}(\Omega) \varphi_{1} & \text{in }\Omega\\
\varphi_1=0 & \text{on }\partial\Omega.
\end{cases}
\end{equation}

We will find that $E_{\mu}$ possesses the following geometric structure.
\begin{lemma}\label{geo-struc}
Assuming that
\begin{equation}\label{eq:cond_baralpha}
\mu<\mu^*:= \left(\dfrac{2S^{N/2}}{N\lambda_{1}(\Omega)}\right)^{\frac{2}{N-2}},
\end{equation}
let us define
\begin{equation}\label{eq:alphabar}
\bar{\alpha}=\bar\alpha(\mu)=S^{N/2}\mu^{1-\frac{N}{2}}.
\end{equation}
Then
\begin{equation}
\inf\limits_{\mathcal{B}_{\bar\alpha}} E_{\mu}<  \frac{1}{N}S^{N/2}\mu^{1-\frac{N}{2}}
\le \inf\limits_{\mathcal{U}_{\bar\alpha}} E_{\mu}.
\end{equation}
\end{lemma}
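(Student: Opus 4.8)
The plan is to estimate the two quantities separately, using Lemma \ref{fmax} as the main computational device. On $\mathcal{U}_{\bar\alpha}$, the Dirichlet integral is fixed equal to $\bar\alpha$, so the only term of $E_\mu$ that varies is $-\frac{\mu}{2^*}\|u\|_{2^*}^{2^*}$; I would bound this below by controlling $\|u\|_{2^*}$ from above via the Sobolev inequality \eqref{ineq:sobolev}, which gives $\|u\|_{2^*}^{2^*} \le S^{-2^*/2}\|\nabla u\|_2^{2^*} = S^{-2^*/2}\bar\alpha^{2^*/2}$. Plugging $\bar\alpha = S^{N/2}\mu^{1-N/2}$ into $\frac12\bar\alpha - \frac{\mu}{2^*}S^{-2^*/2}\bar\alpha^{2^*/2}$ and simplifying (the exponents are arranged precisely so that the powers of $S$ and $\mu$ match) should yield exactly $\frac1N S^{N/2}\mu^{1-N/2}$, giving the right-hand inequality. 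In fact this computation is literally Lemma \ref{fmax} applied with $A = 1$, $B = \mu S^{-2^*/2}$, evaluated at $s = \bar\alpha$, since $\bar\alpha = (A/B)^{N/2-1}$ is precisely the maximizer; so $\inf_{\mathcal{U}_{\bar\alpha}} E_\mu \ge \frac1N A^{N/2}/B^{N/2-1} = \frac1N S^{N/2}\mu^{1-N/2}$.

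**The strict inequality on $\mathcal{B}_{\bar\alpha}$.**
For the left inequality I would exhibit an explicit competitor, namely the first eigenfunction $\varphi_1$, which lies in $\mathcal{B}_{\bar\alpha}$ by \eqref{eq:ph1} provided $\lambda_1(\Omega) < \bar\alpha$; this is exactly the content of the hypothesis \eqref{eq:cond_baralpha} (rearranging $\mu < \mu^*$ gives $\lambda_1(\Omega) < S^{N/2}\mu^{1-N/2} = \bar\alpha$). Then I would estimate $E_\mu(\varphi_1) = \frac12\lambda_1(\Omega) - \frac{\mu}{2^*}\|\varphi_1\|_{2^*}^{2^*}$. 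Since the second term is strictly negative, $E_\mu(\varphi_1) < \frac12\lambda_1(\Omega)$; and since $\lambda_1(\Omega) < \bar\alpha = S^{N/2}\mu^{1-N/2}$, one has $\frac12\lambda_1(\Omega) < \frac12 S^{N/2}\mu^{1-N/2}$. This is not yet good enough — I need the bound $\frac1N S^{N/2}\mu^{1-N/2}$, not $\frac12$. So instead I would not throw away the $L^{2^*}$ term too crudely: along the ray $t\varphi_1/\|t\varphi_1\|_2 = \varphi_1$ (already normalized), one should rather test with the family $u_t$ obtained by the standard trick of dilating, or more simply observe that it suffices to find \emph{some} $u\in\mathcal{B}_{\bar\alpha}$ with $E_\mu(u) < \frac1N S^{N/2}\mu^{1-N/2}$. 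Testing $\varphi_1$ directly: $E_\mu(\varphi_1) = \frac12\lambda_1 - \frac{\mu}{2^*}\|\varphi_1\|_{2^*}^{2^*}$, and since $\lambda_1 < \bar\alpha$ the value $\frac12\lambda_1$ alone may exceed $\frac1N\bar\alpha$ when $\lambda_1$ is close to $\bar\alpha$; hence the $L^{2^*}$ term must be retained.

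**Resolving the obstacle.**
The cleanest fix, which I expect to be the real content of the lemma, is to test $E_\mu$ not on $\varphi_1$ alone but on the whole segment $\{s\mapsto$ rescalings$\}$ and take the infimum: concretely, consider the function $g(\alpha) = \inf\{E_\mu(u) : u\in M,\ \|\nabla u\|_2^2 = \alpha\}$ for $\alpha\in[\lambda_1(\Omega),\bar\alpha]$. On this range one shows $g(\alpha) \le \frac12\alpha - \frac{\mu}{2^*}c\,\alpha^{2^*/2}$ for a suitable structural constant $c>0$ (obtained by testing with a fixed profile rescaled to have Dirichlet norm $\alpha$ — e.g. $\varphi_1$ itself gives $c = \|\varphi_1\|_{2^*}^{2^*}/\lambda_1^{2^*/2}$ only at $\alpha=\lambda_1$, but a genuine family of test functions interpolating toward a Sobolev bubble gives $c$ up to essentially $S^{-2^*/2}$). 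Then $\inf_{\mathcal{B}_{\bar\alpha}} E_\mu = \inf_{\alpha<\bar\alpha} g(\alpha)$, and by Lemma \ref{fmax} the function $\frac12\alpha - \frac{\mu}{2^*}S^{-2^*/2}\alpha^{2^*/2}$ attains its max exactly at $\bar\alpha$ with value $\frac1N S^{N/2}\mu^{1-N/2}$; so for $\alpha$ slightly below $\bar\alpha$ the value is strictly smaller, giving the strict inequality. The main obstacle is therefore the construction of an admissible test function $u\in\mathcal{B}_{\bar\alpha}$ realizing a value strictly below $\frac1N S^{N/2}\mu^{1-N/2}$: one must check that the Sobolev-optimizing bubbles (truncated to sit inside $\Omega$, which is where the Brezis--Nirenberg-type correction enters) can be rescaled to have Dirichlet norm just below $\bar\alpha$ while keeping $\|u\|_2 = 1$, and that the truncation error does not destroy the strict inequality. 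Since only strictness (not a quantitative gap) is needed, and since the condition $\mu<\mu^*$ guarantees $\mathcal{B}_{\bar\alpha}\ni\varphi_1\neq\emptyset$, a soft continuity/monotonicity argument on $g(\alpha)$ near $\alpha=\bar\alpha$ should close the estimate without delicate bubble computations.
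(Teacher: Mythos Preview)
Your treatment of the right-hand inequality $\inf_{\mathcal{U}_{\bar\alpha}} E_\mu \ge \frac1N S^{N/2}\mu^{1-N/2}$ is correct and identical to the paper's: Sobolev plus Lemma~\ref{fmax}.

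The problem is on the left-hand side. You test with $\varphi_1$, obtain $E_\mu(\varphi_1)\le\frac12\lambda_1(\Omega)$, and then claim this ``is not yet good enough'' because you only manage to compare $\frac12\lambda_1(\Omega)$ with $\frac12\bar\alpha$ rather than with $\frac1N\bar\alpha$. But this is a misreading of the hypothesis. Rearranging \eqref{eq:cond_baralpha} does \emph{not} merely give $\lambda_1(\Omega)<\bar\alpha$; it gives
\[
\mu^{\frac{N-2}{2}}<\frac{2S^{N/2}}{N\lambda_1(\Omega)}
\quad\Longleftrightarrow\quad
\lambda_1(\Omega)<\frac{2}{N}\,S^{N/2}\mu^{1-\frac{N}{2}}=\frac{2}{N}\bar\alpha
\quad\Longleftrightarrow\quad
\frac12\lambda_1(\Omega)<\frac1N\bar\alpha.
\]
In other words, the condition $\mu<\mu^*$ is \emph{defined} to be exactly the inequality you need. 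So $E_\mu(\varphi_1)\le\frac12\lambda_1(\Omega)<\frac1N S^{N/2}\mu^{1-N/2}$ and you are done in one line --- which is precisely what the paper does.

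Because of this oversight you embark on a much more elaborate program (families of rescaled test functions, truncated bubbles, a soft monotonicity argument on $g(\alpha)$ near $\bar\alpha$). None of this is needed, and the sketch you give is not self-contained: you do not actually construct a one-parameter family in $M$ with prescribed Dirichlet norm $\alpha$ for all $\alpha$ up to $\bar\alpha$, nor do you justify why the limiting value as $\alpha\nearrow\bar\alpha$ should be \emph{strictly} below $\frac1N\bar\alpha$ rather than equal to it. The ``obstacle'' you identify is illusory; remove it and the proof collapses to the paper's two-line argument.
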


\begin{proof}
Using \eqref{func:main} and \eqref{ineq:sobolev} we obtain
\begin{equation*}
\begin{aligned}\displaystyle
\inf\limits_{\mathcal{U}_{\alpha}} E_{\mu}&\geq \frac{1}{2}\int_{\Omega}|\nabla u|^{2}dx-\frac{\mu}{2^{*}}\int_{\Omega}|u|^{2^{*}}dx\\
&\geq \frac{1}{2}\int_{\Omega}|\nabla u|^{2}dx-\frac{\mu}{2^{*}S^{\frac{2^{*}}{2}}}(\int_{\Omega}|\nabla u|^{2}dx)^{\frac{2^{*}}{2}}\\
&= \frac{1}{2}\alpha -\frac{\mu}{2^{*}S^{\frac{2^{*}}{2}}}(\alpha)^{\frac{2^{*}}{2}}=:f(\alpha).
\end{aligned}
\end{equation*}
By Lemma \ref{fmax}, we have that $\max_{\alpha>\lambda_1(\Omega)} f(\alpha) = f (\bar\alpha)$, with
\[
\bar{\alpha}=S^{N/2}\mu^{1-\frac{N}{2}}>\lambda_1(\Omega)
\]
by \eqref{eq:cond_baralpha}. Thus,
\[\inf\limits_{\mathcal{U}_{\bar\alpha}} E_{\mu}\geq \frac{1}{N}S^{N/2}\mu^{1-\frac{N}{2}}.\]

On the other hand, by \eqref{eq:ph1} we infer
\[\inf\limits_{\mathcal{B}_{\bar\alpha}} E_{\mu}\leq E_{\mu}(\varphi_{1})\leq\frac{1}{2}\lambda_{1}(\Omega).\]
Using $\eqref{eq:cond_baralpha}$ again, we obtain
\[
\inf\limits_{\mathcal{B}_{\bar\alpha}} E_{\mu}\leq\frac{1}{2}\lambda_{1}(\Omega)<\frac{1}{N}S^{N/2}\mu^{1-\frac{N}{2}}\leq\inf\limits_{\mathcal{U}_{\bar\alpha}} E_{\mu}.
\qedhere
\]
\end{proof}

Based on the previous lemma, we define two different levels of the energy, depending on
$\mu$, which will be proved to be critical in the following sections.

First, we define the candidate local minimum level as
\begin{equation}\label{eq:minimumlevel}
m_{\mu}:=\inf\limits_{\mathcal{B}_{\bar\alpha}}E_{\mu},
\end{equation}
By Lemma \ref{geo-struc}, we know that
\begin{equation}\label{level:min}
m_{\mu}<\frac{1}{N}S^{N/2}\mu^{1-\frac{N}{2}}.
\end{equation}

On the other hand, exploiting the local minimum geometry introduced above, we are going to
define a second candidate critical value of mountain pass type.
To this aim, we need to find two functions $w_0$, $w_1$ in $M$ with the following properties:
\begin{equation}\label{eq:w0_inside}
w_0\in \Bcal_{\bar \alpha},\qquad E_\mu(w_0) < \frac{1}{N}S^{N/2}\mu^{1-\frac{N}{2}},
\end{equation}
\begin{equation}\label{eq:w1_outside}
w_1\not\in \Bcal_{\bar \alpha},\qquad E_\mu(w_1) < 0.
\end{equation}
Then the mountain pass structure will be obtained using paths joining $w_0$ and $w_1$. The
main obstruction to do this is that, while $w_0$ can be obtained independently on $\mu$, to
find $w_1$ we have to work on intervals of $\mu$ bounded away from zero.

Let us assume without loss of generality that
$0\in \Omega$. For every
$\eps>0$, let us introduce the function $u_\eps \in H^1_0(\Omega)$ defined as
\begin{equation}\label{eq:def_u_eps}
u_{\varepsilon}(x)=\frac{\eta[N(N-2)\varepsilon^{2}]^{^{\frac{N-2}{4}}}}{[\varepsilon^{2}+ |x|^{2}]^{\frac{N-2}{2}}},
\end{equation}
where $\eta\in C^{\infty}_{0}(\Omega)$, $0\le \eta \le 1$, is a fixed cut-off function
such that $\eta\equiv1$ in some neighborhood of $0$. Then, we define
\[v_{\varepsilon}:=\frac{u_{\varepsilon}}{\|u_{\varepsilon}\|_{2}},\]
which implies that $\|v_{\varepsilon}\|_{2}=1$, namely, $v_{\varepsilon}\in M$, for every $\eps>0$.

Now, using the estimates provided by Struwe \cite[page 179]{Struwebook} (or those by
Brezis and Nirenberg, \cite[eqs. (1.13), (1.29)]{BrezisNirenberg}), we have
\begin{equation}\label{est:struwe}
    \begin{aligned}\displaystyle
\|\nabla u_{\varepsilon}\|_{2}^{2}&=S^{N/2}+O(\varepsilon^{N-2}),\\
\|u_{\varepsilon}\|_{2^{*}}^{2^{*}}&=S^{N/2}+O(\varepsilon^{N}),\\
\|u_{\varepsilon}\|_{2}^{2}&=
\begin{cases}
c \varepsilon^{2}+O(\varepsilon^{N-2}), & \text{if }N\geq5,\\
c \varepsilon^{2}|\ln \varepsilon|+O(\varepsilon^{2}), & \text{if }N=4,\\
c \varepsilon+O(\varepsilon^{2}), & \text{if } N=3,
\end{cases}
    \end{aligned}
\end{equation}
where $c$ denotes a strictly positive constant (depending on $N$).
\begin{remark}\label{rmk:why_R}
We stress the fact that the above estimates, in particular the one for 
$\|u_{\varepsilon}\|_{2^{*}}^{2^{*}}$, are based on the fact that $\eta\equiv1$ in some 
neighborhood of $0$ (and they actually depend on the choice of such neighborhood).
\end{remark}

\begin{lemma}\label{lem:w0}
There exists $0<\mu^{**}\le \mu^*$ such that, for every $0<\mu<\mu^{**}$ we have that
$w_0=v_1$ satisfies \eqref{eq:w0_inside}.
\end{lemma}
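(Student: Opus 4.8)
The plan is to exploit that $w_0=v_1$ is a single function of $H^1_0(\Omega)$, fixed once and for all (it does not depend on $\mu$), whereas both thresholds appearing in \eqref{eq:w0_inside} — the radius $\bar\alpha(\mu)=S^{N/2}\mu^{1-\frac N2}$ of the region $\Bcal_{\bar\alpha}$ and the energy level $\frac1N S^{N/2}\mu^{1-\frac N2}$ — diverge to $+\infty$ as $\mu\to0^+$, since $1-\frac N2\le-\frac12<0$ for every $N\ge3$. So the statement should reduce to the assertion that a fixed test function lies, for $\mu$ small, inside a sublevel region whose size blows up.

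First I would record the two constants $A:=\|\nabla v_1\|_2^2$ and $B:=\|v_1\|_{2^*}^{2^*}$. Since $u_1\in H^1_0(\Omega)$ is a fixed function with compact support and $v_1=u_1/\|u_1\|_2$ (see \eqref{eq:def_u_eps} and \eqref{est:struwe} with $\eps=1$), these are finite, strictly positive numbers depending only on $N$, $\Omega$ and the cut-off $\eta$, and by construction $v_1\in M$. In particular $\|\nabla w_0\|_2^2=A$ and, for every $\mu>0$, $E_\mu(w_0)=\frac12 A-\frac{\mu}{2^*}B<\frac12 A$.

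Then I would simply pick $\mu^{**}\in(0,\mu^*]$ small enough that the two elementary inequalities
\[
A < S^{N/2}\mu^{1-\frac N2} \qquad\text{and}\qquad \tfrac12 A < \tfrac1N S^{N/2}\mu^{1-\frac N2}
\]
hold for all $\mu\in(0,\mu^{**})$; this is possible exactly because the right-hand sides tend to $+\infty$, and the constraint $\mu^{**}\le\mu^*$ is kept merely by intersecting with $(0,\mu^*)$, so that Lemma \ref{geo-struc} and the very definition of $\bar\alpha$ remain in force. The first inequality is precisely $w_0\in\Bcal_{\bar\alpha}$, and the second, combined with $E_\mu(w_0)<\frac12 A$, yields the energy bound in \eqref{eq:w0_inside}.

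There is essentially no obstacle in this lemma: it is an elementary scaling observation, and the only mild care needed is bookkeeping the threshold $\mu^{**}$ and keeping it below $\mu^*$. The genuine difficulty in setting up the mountain pass geometry is postponed to the companion construction of $w_1$: as the paper already signals, a function $w_1$ satisfying \eqref{eq:w1_outside} cannot be found uniformly as $\mu\to0^+$, which is why that part must be carried out on intervals of $\mu$ bounded away from zero.
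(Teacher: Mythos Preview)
Your proposal is correct and follows exactly the same approach as the paper's proof: both observe that $w_0=v_1$ is a fixed function independent of $\mu$, while both $\bar\alpha(\mu)$ and $\frac{1}{N}S^{N/2}\mu^{1-\frac{N}{2}}$ diverge to $+\infty$ as $\mu\to0$, so the two conditions in \eqref{eq:w0_inside} are eventually satisfied. The paper simply states this in one sentence, whereas you have written out the constants and inequalities explicitly.
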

\begin{proof}
The proof is trivial since $w_0$ does not depend on $\mu$, while both $\bar\alpha(\mu)$ and
$\frac{1}{N}S^{N/2}\mu^{1-\frac{N}{2}}$ increase to $+\infty$ as $\mu\to0$.
\end{proof}
\begin{remark}\label{rmk:better_w0}
Actually, with more work, in the previous lemma it is possible to choose $\mu^{**}$ as
near as $\mu^*$ as we want.
Indeed, one can see that $v_\eps\to \eta/\|\eta\|_2$ in $H^1_0(\Omega)$ as $\eps\to+\infty$.
As a consequence, one can take $0\in\Omega$ to be a maximum point of $\varphi_1$ and $\eta$ a small 
perturbation of $\varphi_1/\|\varphi_1\|_\infty$, locally constant near $0$: in this way, it is possible to show that $w_0=v_\eps$
satisfies \eqref{eq:w0_inside}, for every $\mu\leq \mu^{**}<\mu^*$, as long as $\eps$ is fixed
large enough. On the other hand, the rigorous estimates to prove this are quite long, and
in the following we will need to lower $\mu^{**}$ to fulfill further conditions. For these
reasons, we do not pursue this strategy here.
\end{remark}
\begin{lemma}\label{lem:w1}
Let $\mu^{**}$ be defined as in Lemma \ref{lem:w0}, and let $0<\mu_0\le \mu^{**}$. There
exists $\eps_1>0$, dependent on $\mu_0$, such that $w_1=v_{\eps_1}$ satisfies
\eqref{eq:w1_outside} for every $\frac{\mu_0}{2}<\mu<\mu_0$.
\end{lemma}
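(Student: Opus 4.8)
The plan is to produce $w_1 = v_{\eps_1}$ with $\|\nabla v_{\eps_1}\|_2^2$ large (so that $w_1\notin\Bcal_{\bar\alpha}$) and $E_\mu(v_{\eps_1})<0$, by taking $\eps_1$ small. First I would compute the relevant quantities from the Struwe estimates \eqref{est:struwe}: since $v_\eps = u_\eps/\|u_\eps\|_2$, we have
\[
\|\nabla v_\eps\|_2^2 = \frac{\|\nabla u_\eps\|_2^2}{\|u_\eps\|_2^2} = \frac{S^{N/2}+O(\eps^{N-2})}{\|u_\eps\|_2^2},
\qquad
\|v_\eps\|_{2^*}^{2^*} = \frac{\|u_\eps\|_{2^*}^{2^*}}{\|u_\eps\|_2^{2^*}} = \frac{S^{N/2}+O(\eps^N)}{\|u_\eps\|_2^{2^*}}.
\]
In all dimensions $N\ge3$, $\|u_\eps\|_2^2\to 0$ as $\eps\to0$, hence $\|\nabla v_\eps\|_2^2\to+\infty$; in particular, for $\eps_1$ small enough (depending only on $S$, $N$ and the cut-off, not on $\mu$), $\|\nabla v_{\eps_1}\|_2^2 > S^{N/2}(\mu_0/2)^{1-N/2} \ge \bar\alpha(\mu)$ for every $\mu < \mu_0$, because $\bar\alpha(\mu) = S^{N/2}\mu^{1-N/2}$ is increasing in $\mu$. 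This gives the first half of \eqref{eq:w1_outside} uniformly for $\mu\in(\mu_0/2,\mu_0)$.

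For the energy, I would write
\[
E_\mu(v_\eps) = \frac12 \|\nabla v_\eps\|_2^2 - \frac{\mu}{2^*}\|v_\eps\|_{2^*}^{2^*}
= \frac{1}{\|u_\eps\|_2^2}\left( \frac12\bigl(S^{N/2}+O(\eps^{N-2})\bigr) - \frac{\mu}{2^*}\,\frac{S^{N/2}+O(\eps^N)}{\|u_\eps\|_2^{2^*-2}}\right).
\]
Since $\|u_\eps\|_2^{2^*-2} = \|u_\eps\|_2^{4/(N-2)}\to 0$ as $\eps\to 0$, the negative term dominates: the bracket tends to $-\infty$, uniformly for $\mu$ in the compact range $[\mu_0/2,\mu_0]$, because there the factor $\mu/2^*$ is bounded below by $\mu_0/(2\cdot 2^*)>0$. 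More concretely, using the explicit asymptotics of $\|u_\eps\|_2^2$ in \eqref{est:struwe} (which is $\sim c\eps^2$ for $N\ge5$, $\sim c\eps^2|\ln\eps|$ for $N=4$, $\sim c\eps$ for $N=3$), one sees that $\|u_\eps\|_2^{2^*-2}$ is a positive power of $\eps$ (times a logarithmic factor when $N=4$) tending to $0$, so for $\eps_1$ small enough, depending on $\mu_0$, the bracket is negative; hence $E_\mu(v_{\eps_1})<0$ for all $\mu\in(\mu_0/2,\mu_0)$. This is exactly the place where the dependence on $\mu_0$ enters and why one must work on intervals bounded away from $0$: if $\mu$ were allowed to approach $0$, the coefficient $\mu/2^*$ would degenerate and no fixed $\eps_1$ would force negativity.

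The only mild subtlety — and the step I'd flag as the main obstacle, though it is routine — is bookkeeping the two requirements simultaneously with a single $\eps_1$: one needs $\eps_1$ small enough both for the gradient bound (threshold depending only on $N$, $S$, $\eta$) and for the negativity of the energy bracket (threshold depending on $\mu_0$); taking the minimum of the two thresholds settles it. I would also remark, as in Remark \ref{rmk:why_R}, that the estimate on $\|u_\eps\|_{2^*}^{2^*}$ relies on $\eta\equiv 1$ near $0$; since $\eta$ is fixed once and for all, this causes no circularity. Then the lemma follows with $w_1 = v_{\eps_1}$.
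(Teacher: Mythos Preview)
Your argument is correct and follows essentially the same route as the paper's proof: use the Struwe estimates to show $\|\nabla v_\eps\|_2^2\to+\infty$ and that the negative term in $E_\mu(v_\eps)$ dominates as $\eps\to0$, uniformly for $\mu\ge\mu_0/2$. One small slip to fix: $\bar\alpha(\mu)=S^{N/2}\mu^{1-N/2}$ is \emph{decreasing} in $\mu$ (since $1-N/2<0$), not increasing; the inequality $\bar\alpha(\mu_0/2)\ge\bar\alpha(\mu)$ you use is nevertheless correct precisely because of this, for $\mu>\mu_0/2$ (not for all $\mu<\mu_0$), and correspondingly the gradient threshold does depend on $\mu_0$.
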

\begin{proof}
The proof follows using the asymptotic estimates \eqref{est:struwe}. Indeed, we have
\[
\|\nabla v_\eps\|_2^2 = O(\|u_\eps\|_2^{-2})\to+\infty \qquad\text{as }\eps\to0.
\]
As a consequence, if $\eps$ is sufficiently small, we obtain
\[
\|\nabla v_\eps\|_2^2 > \bar\alpha (\mu_0/2),
\]
so that $v_\eps \not \in \Bcal_{\bar \alpha (\mu)}$ for every $\mu>\mu_0/2$.

Analogously
\[
E_\mu(v_\eps) = O(\|u_\eps\|_2^{-2}) - \mu O(\|u_\eps\|_2^{-2^*})
\le O(\|u_\eps\|_2^{-2}) - \frac{\mu_0}{2} O(\|u_\eps\|_2^{-2^*}),
\]
where all the ``big O'' terms are independent on $\mu$. The lemma follows as $\|u_\eps\|_2\to0$ as $\eps\to0$.
\end{proof}

Letting $w_0$ and $w_1$ be defined as in Lemmas \ref{lem:w0} and \ref{lem:w1} respectively,
by \eqref{eq:w0_inside} and \eqref{eq:w1_outside}, we define in a standard way the mountain pass value
\begin{equation}\label{level:mp}
c_{\mu}:=\inf\limits_{\gamma\in\Gamma} \sup\limits_{t\in[0,1]}E_{\mu}(\gamma(t)),
\qquad\text{where }
\Gamma:=\{\gamma\in C([0,1], M) : \gamma(0)=w_0, \gamma(1)=w_1\}.
\end{equation}
In particular, by \eqref{eq:w0_inside}, \eqref{eq:w1_outside}, we have that, for any
$\gamma\in\Gamma$, $\gamma([0,1])$ intersects $\Ucal_{\bar\alpha}$.
By Lemma \ref{geo-struc} and the definition of $c_{\mu}$, we infer that
\begin{equation}\label{eq:c_mu_from_below}
m_{\mu}<\frac{1}{N}S^{N/2}\mu^{1-\frac{N}{2}}\leq c_{\mu},
\end{equation}
which implies in particular that $c_{\mu}\neq m_{\mu}$; as a consequence, if we prove that they are both critical levels, then problem \eqref{eq:main} admits two different solutions.
\begin{remark}\label{rmk:c_mu not depend on mu0}
Notice that, in principle, the class of paths $\Gamma$ depends on the choice of $\mu_0$, via the
definition of $w_1$ given in Lemma \ref{lem:w1}. As a matter of fact, using \eqref{eq:c_mu_from_below}
it is not difficult to prove that actually $c_\mu$ only depends on $\mu\in(0,\mu^{**})$. Indeed,
if $\mu \in (\mu_0/2,\mu_0) \cap (\mu_0'/2,\mu_0')$, with (say) $0<\mu_0<\mu_0'<\mu^{**}$, then
we can connect $w_1=v_{\eps_1}$ and $w_1' = v_{\eps_1'}$ with the arc $\sigma = \{v_\eps:\eps_0\le
\eps \le\eps_0'\}$, and $E_\mu<0$ on $\sigma$, at least for $\mu^{**}$ small enough. Since every element of $\Gamma'$ corresponds to an
element of $\Gamma$, and vice versa, by juxtaposition with $\sigma$, we obtain that
\[
0 < \inf\limits_{\gamma\in\Gamma} \sup\limits_{t\in[0,1]}E_{\mu}(\gamma(t)) =
\inf\limits_{\gamma'\in\Gamma'} \sup\limits_{t\in[0,1]}E_{\mu}(\gamma'(t)).
\]
\end{remark}

As a next step, we need a sharp estimate from above of $c_\mu$.
\begin{lemma}\label{energy:mp}
Let $N\ge3$, and let us assume, in case $N=3$, that $0<R<R_\Omega$, i.e. 
\[
\overline{B}_R\subset\Omega.
\]

Then, for a possibly smaller value of $\mu^{**}$, there exists $C>0$, not depending on $\mu$ (but 
depending on $R$ if $N=3$), such that, for every $0<\mu<\mu^{**}$
\begin{equation}\label{eq:mpfromabove}
c_{\mu}\leq
g(\mu):= \frac{1}{N}S^{N/2}\mu^{1-\frac{N}{2}}+ h(\mu),
\end{equation}
where
\begin{equation}\label{eq:mpfromabove_h}
    \begin{aligned}\displaystyle
h(\mu):=
\begin{cases}
C\mu^{\frac{(N-2)(N-4)}{4}}, \ \ \ &\mbox{if} \ N\geq5,
\smallskip\\
C|\ln \mu|^{-1}, \ \ \ &\mbox{if} \ N=4,
\smallskip\\
\frac{1}{4}\lambda_{1}(B_{R})+C\mu^{1/2}, \ \ \ &\mbox{if} \ N=3.
\end{cases}
    \end{aligned}
\end{equation}
\end{lemma}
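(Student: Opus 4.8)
The plan is to construct, for each small $\mu$, an explicit competitor path $\gamma\in\Gamma$ joining $w_0$ to $w_1$ on which $\sup_t E_\mu(\gamma(t))$ does not exceed $g(\mu)$, and then estimate that supremum. Since $w_0$ already lies in $\mathcal B_{\bar\alpha}$ with $E_\mu(w_0)<\frac1N S^{N/2}\mu^{1-N/2}$ and $E_\mu(w_1)<0$, it suffices to bound $E_\mu$ along a convenient connecting curve. The natural choice is a path that interpolates through a scaled Aubin--Talenti bubble $v_\eps$: first move from $w_0$ to some $v_{\eps^*}$ inside $\mathcal B_{\bar\alpha}$ (keeping $E_\mu$ below $\frac1N S^{N/2}\mu^{1-N/2}$, which is possible since on that portion the energy stays in a region controlled by Lemma \ref{geo-struc}), and then slide along the one-parameter family $\{v_\eps\}$ from $v_{\eps^*}$ down to $w_1=v_{\eps_1}$. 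Thus the real work is to estimate $\sup_{\eps>0} E_\mu(v_\eps)$.

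Using the normalization $v_\eps=u_\eps/\|u_\eps\|_2$ together with the Struwe estimates \eqref{est:struwe}, I would write
\[
E_\mu(v_\eps)=\frac{1}{2}\frac{\|\nabla u_\eps\|_2^2}{\|u_\eps\|_2^2}-\frac{\mu}{2^*}\frac{\|u_\eps\|_{2^*}^{2^*}}{\|u_\eps\|_2^{2^*}}
=\frac{1}{2}\frac{S^{N/2}+O(\eps^{N-2})}{\|u_\eps\|_2^2}-\frac{\mu}{2^*}\frac{S^{N/2}+O(\eps^{N})}{\|u_\eps\|_2^{2^*}}.
\]
Maximizing in the single variable $t=\|u_\eps\|_2^{-2}$ via Lemma \ref{fmax}, with $A=\|\nabla u_\eps\|_2^2$ and $B=\mu\|u_\eps\|_{2^*}^{2^*}$, gives $\max_\eps E_\mu(v_\eps)\le \frac1N A^{N/2}B^{1-N/2}=\frac1N S^{N/2}\mu^{1-N/2}\bigl(1+O(\eps^{N-2})\bigr)\cdot(\text{correction})$, but the decisive point is that the maximizer is attained at a value $\eps=\eps(\mu)\to0$ as $\mu\to0$, so that one must plug the relation $\|u_\eps\|_2^{-2}\approx A/B\approx S^{N/2}\mu^{-1}\|u_\eps\|_2^{-2}$, i.e.\ $\|u_\eps\|_2^2\approx S^{N/2}/\mu$, back into \eqref{est:struwe} to determine $\eps$ in terms of $\mu$. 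In dimension $N\ge5$ this yields $\eps^2\sim \mu/S^{N/2}$ (up to constants), hence the error term $O(\eps^{N-2})$ from the gradient becomes $O(\mu^{(N-2)/2})$ and, after dividing by $\|u_\eps\|_2^2$ and accounting for the structure of $f(\bar s)$, produces precisely the claimed $C\mu^{(N-2)(N-4)/4}$; for $N=4$ the logarithmic factor in $\|u_\eps\|_2^2$ turns $\eps^2|\ln\eps|\sim\mu$ into the $C|\ln\mu|^{-1}$ correction. These are the routine (if delicate) calculus-of-asymptotics computations.

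The genuinely different, and hardest, case is $N=3$. There $\|u_\eps\|_2^2= c\eps+O(\eps^2)$, so enforcing $\|u_\eps\|_2^2\approx S^{3/2}/\mu$ forces $\eps\sim 1/\mu\to\infty$ as $\mu\to0$ — i.e.\ the optimal $\eps$ is \emph{large}, and then the estimates \eqref{est:struwe}, which are valid for small $\eps$, are useless. This is exactly the well-known obstruction in the Brezis--Nirenberg problem in dimension three. The remedy, and the reason the hypothesis $\overline B_R\subset\Omega$ enters, is to \emph{not} use $v_\eps$ for the final stretch of the path but rather to connect to a test function supported in $B_R$: namely, on $B_R$ one compares with the first Dirichlet eigenfunction $\varphi_1^{B_R}$, whose Rayleigh quotient is $\lambda_1(B_R)$, and builds the path so that near $w_1$ it passes through (a normalized truncation of) $\varphi_1^{B_R}$; the energy there is of order $\frac12\lambda_1(B_R)$ plus, after the monotonicity-trick optimization in the scaling parameter, a term $\frac1N S^{3/2}\mu^{-1/2}+\frac14\lambda_1(B_R)+C\mu^{1/2}$. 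Concretely I would take the three-dimensional competitor to be a convex-type combination / concatenation of a bubble concentrated well inside $B_R$ and the rescaled eigenfunction, optimize the concentration parameter and the $L^2$-mass split using Lemma \ref{fmax} once more, and use $\lambda_1(B_R)=\lambda_1(B_1)/R^2$ to make the constant explicit. The main obstacle is therefore precisely this $N=3$ construction: keeping the path admissible (endpoints $w_0$, $w_1$ fixed, staying in $M$, crossing $\mathcal U_{\bar\alpha}$ only where we can afford it), while squeezing the lower-order term down to $\frac14\lambda_1(B_R)+C\mu^{1/2}$ — this is what dictates shrinking $\mu^{**}$ and what later forces the structural hypothesis \eqref{eq:BRBR} relating $\lambda_1(B_{R_\Omega})$ and $\lambda_1(\Omega)$.
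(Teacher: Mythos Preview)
Your overall strategy for $N\ge4$ is the same as the paper's: since $w_0=v_1$ and $w_1=v_{\eps_1}$ are both in the one-parameter family $\{v_\eps\}$, the entire path is $\eps\mapsto v_\eps$ and one only needs to bound $\sup_{0<\eps\le1}E_\mu(v_\eps)$. (Your preliminary step ``move from $w_0$ to some $v_{\eps^*}$'' is unnecessary --- $w_0$ is already $v_1$.) Your intermediate asymptotics are slightly off: the maximizer in $s=\|u_\eps\|_2^{-2}$ of $f(s)=\tfrac12 S^{N/2}s-\tfrac{\mu}{2^*}S^{N/2}s^{2^*/2}$ is $\bar s=\mu^{1-N/2}$, so $a(\eps)=\|u_\eps\|_2^2\approx\mu^{N/2-1}$, \emph{not} $S^{N/2}/\mu$. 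For $N\ge5$ this gives $\eps\sim\mu^{(N-2)/4}$ (not $\eps^2\sim\mu$), and then the error $O(\eps^{N-2})/a(\eps)=O(\eps^{N-4})=O(\mu^{(N-2)(N-4)/4})$ as you eventually state.

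The genuine gap is your treatment of $N=3$. Your diagnosis of the obstruction is wrong: with $a(\eps)\approx c\eps$ and $a(\eps)\approx\mu^{1/2}$ at the maximum, the optimal $\eps$ satisfies $\eps\sim\mu^{1/2}\to0$, \emph{not} $\eps\to\infty$. The true difficulty is that the subleading term in $\|\nabla u_\eps\|_2^2$ and the leading term in $\|u_\eps\|_2^2$ are \emph{both} of order $\eps$, so their ratio contributes an $O(1)$ constant to $E_\mu(v_\eps)$ that does not vanish as $\mu\to0$. The paper does not change the path or introduce the Dirichlet eigenfunction of $B_R$ as a separate test function; instead it still uses $\{v_\eps\}$ but refines the choice of the cutoff $\eta$ in the definition of $u_\eps$: taking $\eta(x)=\cos\bigl(\tfrac{\pi}{2R}(|x|-\tau)\bigr)$ on $B_{R_\Omega}\setminus B_\tau$ (and $\eta\equiv1$ on $B_\tau$, $\tau=R_\Omega-R$) makes the $O(1)$ ratio exactly
\[
\frac{\int_\tau^{\tau+R}|\eta'|^2\,dr}{\int_\tau^{\tau+R}\eta^2\,dr}=\frac{\pi^2}{4R^2}=\frac14\lambda_1(B_R),
\]
which is where the mysterious $\tfrac14\lambda_1(B_R)$ comes from. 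Your proposed ``convex-type combination / concatenation of a bubble and the rescaled eigenfunction'' is too vague to yield this constant, and since the path must in any case cross the high-energy region $\mathcal U_{\bar\alpha}$ via a bubble-like profile, inserting $\varphi_1^{B_R}$ elsewhere along the path does not help bound the supremum.
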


\begin{proof}
Let $0<\mu<\mu^{**}$ and let us choose $\mu_0\le \mu^{**}$ in such a way that $\frac{\mu_0}{2}
<\mu<\mu_0$. With this choice of $\mu_0$, let $w_1 = v_{\eps_1}$ be defined as in Lemma \ref{lem:w1},
and let us define the maps
\[
t \mapsto \hat\eps,\ \ \hat\eps(t)=1 - (1-\eps_1)t,
\qquad\text{and} \qquad
\hat \gamma : [0,1] \to M,\ \ \hat\gamma(t) = v_{\hat\eps(t)}.
\]
By definition, we obtain that $\hat \gamma \in \Gamma$ (see also Remark \ref{rmk:c_mu not depend on mu0}), so that
\[
 c_{\mu}\leq \max_{0\le t\le 1} E_{\mu}(\hat{\gamma}(t)) = \max_{\eps_1\le \eps\le 1} E_\mu (v_\eps) \le
 \sup_{0<\eps\le1} E_\mu (v_\eps).
\]
Defining $a(\varepsilon):=\|u_{\varepsilon}\|_{2}^{2}$, by \eqref{est:struwe}, we divide the proof
into the following three cases (as the strategy is the same, we provide the full details only for the first case, and we just sketch the other two).

\underline{Case 1: $N\geq5$}. By \eqref{est:struwe}, we have
\begin{equation*}
    \begin{aligned}\displaystyle
 c_{\mu}\leq \sup\limits_{0<\varepsilon\le 1}E_{\mu}\left(\frac{u_{\varepsilon}}{\|u_{\varepsilon}\|_{2}}\right)&=\sup\limits_{0<\varepsilon\le 1}\left(\frac{1}{2}\frac{\|\nabla u_{\varepsilon}\|^{2}_{2}}{\| u_{\varepsilon}\|^{2}_{2}}-\frac{\mu}{2^{*}}\frac{\| u_{\varepsilon}\|^{2^{*}}_{2^{*}}}{\| u_{\varepsilon}\|^{2^{*}}_{2}}\right)\\
 &=\sup\limits_{0<\varepsilon\le 1}\left[\frac{1}{2}\frac{S^{N/2}+O(\varepsilon^{N-2})}{a(\varepsilon)}-\frac{\mu}{2^{*}}\frac{S^{N/2}+O(\varepsilon^{N})}{(a(\varepsilon))^{2^{*}/2}}\right]\\
 &=\sup\limits_{0<\varepsilon\le 1}\left[S^{N/2}\left(\frac{1}{2}\frac{1}{a(\varepsilon)}-\frac{\mu}{2^{*}}\frac{1}{(a(\varepsilon))^{2^{*}/2}}\right)+O(\varepsilon^{N-4})\right]\\
 &\leq S^{N/2} \sup\limits_{0<\varepsilon\le 1}\left[\frac{1}{2}\frac{1}{a(\varepsilon)}-\frac{\mu}{2^{*}}\frac{1}{(a(\varepsilon))^{2^{*}/2}}+C\varepsilon^{N-4}\right].
    \end{aligned}
\end{equation*}
Now, we can define $f(t):=\dfrac{1}{2}t-\dfrac{\mu}{2^{*}}t^{2^{*}/2}$ so that, by Lemma \ref{fmax}, $\max\limits_{t>0} f(t)=f(\mu^{1-\frac{N}{2}})$. Under this notation,
\begin{equation}\label{eq:cmuabove1}
c_{\mu}\leq S^{N/2}  \max\left[ \sup\limits_{A_1}f\left(\frac{1}{a(\varepsilon)}\right)+ C\eps^{N-4},\sup\limits_{A_2}f\left(\frac{1}{a(\varepsilon)}\right)+ C\eps^{N-4}\right]
\end{equation}
where
\[
A_1 = (0,1] \cap \{\eps:a(\eps)\le 2\mu^{\frac{N}{2}-1}\}
\qquad \text{and} \qquad
A_2 = (0,1] \setminus A_1.
\]
Now,
\begin{equation}\label{eq:cmuabove2}
\sup\limits_{A_2}f\left(\frac{1}{a(\varepsilon)}\right)+ C\eps^{N-4} \le
f\left(\frac{1}{2\mu^{\frac{N}{2}-1}}\right) + C = \frac{1-\sigma}{N}\mu^{1-\frac{N}{2}} + C,
\end{equation}
where $0<\sigma<1$ is a universal constant, explicit in $N$. On the other hand, using
again \eqref{est:struwe}, we have that, if $\mu^{**}$ (and thus $\eps\in A_1$) is sufficiently small,
then
\[
\eps \in A_1 \implies \eps \le C'\mu^{\frac{N-2}{4}},
\]
for some $C'>0$. This yields
\begin{equation}\label{eq:cmuabove3}
\sup\limits_{A_1}f\left(\frac{1}{a(\varepsilon)}\right)+ C\eps^{N-4} \le
f\left(\frac{1}{\mu^{\frac{N}{2}-1}}\right) + C'' \mu^{\frac{(N-2)(N-4)}{4}} =
 \frac{1}{N}\mu^{1-\frac{N}{2}} + C'' \mu^{\frac{(N-2)(N-4)}{4}}.
\end{equation}
Finally, plugging \eqref{eq:cmuabove2} and \eqref{eq:cmuabove3} into \eqref{eq:cmuabove1}, and taking
$\mu^{**}$ small enough, we have that the lemma follows, in case $N\ge5$, for every $0<\mu<\mu^{**}$.

\underline{Case 2: $N=4$}. By \eqref{est:struwe}, in this case we have
\begin{equation*}
    \begin{aligned}\displaystyle
 c_{\mu}\leq \sup\limits_{0<\varepsilon\le 1}E_{\mu}\left(\frac{u_{\varepsilon}}{\|u_{\varepsilon}\|_{2}}\right)\leq S^{2} \sup\limits_{0<\varepsilon\le 1}\left[\frac{1}{2}\frac{1}{a(\varepsilon)}-\frac{\mu}{4}\frac{1}{(a(\varepsilon))^{2}}+C\frac{1}{|\ln \varepsilon|}\right].
    \end{aligned}
\end{equation*}
Arguing as in the proof of Case 1, and taking into account that now $a(\eps)\le 2\mu^{\frac{N}{2}-1}$
implies $\eps^2 \ln \eps \le C' \mu^{\frac{N}{2}-1}$, we obtain the required estimate.

\underline{Case 3: $N=3$}.
As it is well known, in dimension $N=3$ the estimates in \eqref{est:struwe} need to be improved.
Precisely, with the same definitions as above, we argue similarly as in the proof of
\cite[eqs. (1.27)-(1.29)]{BrezisNirenberg}, although we need a small modification in that argument, 
to take into account Remark \ref{rmk:why_R}: we assume that $\overline{B}_R\subset B_{R_\Omega} 
\subset \Omega$ (where, up to a translation, the balls are centered at $0$), 
we write $\tau=R_\Omega-R>0$ and we choose
\[
\eta(x)=
\begin{cases}
1  &  x \in B_\tau\\
\cos\dfrac{\pi}{2R}\left( |x| - \tau\right)   &  x \in B_{R_\Omega}\setminus B_\tau
\end{cases}
\] 
in the definition of $u_\eps$, see equation \eqref{eq:def_u_eps}.
We obtain
\begin{equation*}
    \begin{aligned}\displaystyle
\|\nabla u_{\varepsilon}\|_{2}^{2}&=S^{3/2}+ c \varepsilon \int\limits_{\tau}\limits^{\tau+R}|\eta^{\prime}|^{2}dr +O(\varepsilon^{2}),\\
\|u_{\varepsilon}\|_{2^{*}}^{2^{*}}&=S^{3/2}+O(\varepsilon^{3}),\\
\|u_{\varepsilon}\|_{2}^{2}&=c\varepsilon \int\limits_{\tau}\limits^{\tau+R}\eta^{2}dr+O(\varepsilon^{2}),
    \end{aligned}
\end{equation*}
where $c= 3^{1/2}\cdot 4\pi$.

Now, writing again $a(\varepsilon)=\|u_{\varepsilon}\|_{2}^{2}$, we have
\begin{equation*}
    \begin{aligned}\displaystyle
    c_{\mu}\leq \sup\limits_{0<\varepsilon\le 1}E_{\mu}\left(\frac{u_{\varepsilon}}{\|u_{\varepsilon}\|_{2}}\right)&=\sup\limits_{0<\varepsilon\le 1}\left[\frac{1}{2}\frac{S^{3/2}+c \varepsilon \int\limits_{\tau}\limits^{\tau+R}|\eta^{\prime}|^{2}dr +O(\varepsilon^{2})}{a(\varepsilon)}-\frac{\mu}{6}\frac{S^{3/2}+O(\varepsilon^{3})}{a(\varepsilon)^{3}}\right]\\
    &=S^{3/2}\sup\limits_{0<\varepsilon\le1}\left[\left(\frac{1}{2}\frac{1}{a(\varepsilon)}-\frac{\mu}{6}\frac{1}{a(\varepsilon)^{3}}\right)
    +O(\varepsilon)+C\mu\right]+\frac{\int\limits_{\tau}\limits^{\tau+R}|\eta^{\prime}|^{2}dr}{\int\limits_{\tau}\limits^{\tau+R}\eta^{2}dr}.
    \end{aligned}
\end{equation*}
Recalling the definition of $\eta$ in this case, we have that
\[
\frac{\int\limits_{\tau}\limits^{\tau+R}|\eta^{\prime}|^{2}dr}{\int\limits_{\tau}\limits^{\tau+R}\eta^{2}dr}=
\frac{\pi^2}{4R^2}
=\frac{1}{4}\lambda_{1}(B_{R}),\]
which implies that
\begin{equation*}
    \begin{aligned}\displaystyle
 c_{\mu}\leq \sup\limits_{0<\varepsilon\le 1}E_{\mu}\left(\frac{u_{\varepsilon}}{\|u_{\varepsilon}\|_{2}}\right)\leq S^{3/2}\sup\limits_{0<\varepsilon\le 1}\left[\left(\frac{1}{2}\frac{1}{a(\varepsilon)}-\frac{\mu}{6}\frac{1}{a(\varepsilon)^{3}}\right)
    +C\varepsilon+C\mu\right]+\frac{1}{4}\lambda_{1}(B_{R}),
    \end{aligned}
\end{equation*}
where $B_{R}\subset \Omega$. Then, arguing as in Case 1, taking into account that $a(\eps)\le 2\mu^{\frac{N}{2}-1}$ implies $\eps \le C' \mu^{1/2}$, and choosing a possible smaller value of
$\mu^{**}$, we conclude the proof.
\end{proof}

\section{Proof of the main results}\label{sec:proof_main_res}
\subsection{Proof of Theorem \ref{Th:mini}}
In this section, we assume that $\Omega\subset\R^N$ is a bounded domain and prove that, if $\mu <\mu^*$,
then the local minimum level $m_{\mu}$, defined in \eqref{eq:minimumlevel}, is achieved. Then Theorem \ref{Th:mini} will follow by
the change of variables \eqref{eq:changeofvariables}.

By Lemma \ref{geo-struc}, there exists a
minimizing sequence
$(v_{n})_{n}\subset \Bcal_{\bar\alpha}$ associated to $m_{\mu}$. Since both $E_\mu$ and
$\Bcal_{\bar\alpha}$ are even, without loss of generality we can assume that $v_n\ge 0$ for
every $n$. Then, by Ekeland's principle, we can find another minimizing
sequence $(u_{n})_{n}\subset\Bcal_\alpha$ which is a Palais-Smale sequence for $E_\mu$ at level
$m_\mu$, and
\begin{equation}\label{eq:posmin}
\|u_n - v_n\|_{H^1_0}\to 0
\end{equation}
as $n\to+\infty$.

(Actually, Ekeland's principle holds for global minimizers; on the other hand, in view of Lemma
\ref{geo-struc}, it is not difficult to modify $E_\mu$ outside $\Bcal_{\bar\alpha}$ in such a way
that $m_\mu$ is the global infimum of the modified functional.)

Using once again Lemma \ref{geo-struc}, we also find that
\begin{equation}\label{bound:u:min}
\|\nabla u_{n}\|_{2}^{2}<\bar{\alpha}=S^{N/2}\mu^{1-\frac{N}{2}},
\end{equation}
which implies that $u_{n}$ is bounded in $H^{1}_{0}(\Omega)$. As a consequence,
also
\[
\lambda_n = \int_{\Omega}|\nabla u_n|^{2}dx-\mu\int_{\Omega}|u_n|^{2^{*}}dx
\qquad\text{is bounded, }
\]
so that we are in a position to apply Corollary \ref{cor:split}. Then, there exists
$u^{0}\in M$ such that, up to a subsequence, $u_{n}\rightharpoonup u^{0}$ in
$H^{1}_{0}(\Omega)$. Finally, assume that $u_n \not \to u^0$ strongly in $H^1_0$. Then
\eqref{eq:split_norm} yields
\[
0<\lambda_1(\Omega)\le\|\nabla u^0\|_2^2 \le \bar \alpha - S^{N/2}\mu^{1-\frac{N}{2}} =0,
\]
a contradiction. Then $u_n  \to u^0$ strongly in $H^1_0$, and the theorem follows.

\subsection{Proof of Theorem \ref{Th:mp2}}\label{sec:th2}
In this section, we assume that $\Omega\subset\R^N$ is a bounded domain, with $N\ge3$. Moreover,
in case $N=3$, we also assume that
\begin{equation}\label{eq:assumpt_ball_N=3}
\Omega\supset \overline{B}_R,\qquad\text{ with }\lambda_1(B_R) < \frac43 \lambda_1(\Omega).
\end{equation}
Notice that this can be done in view of assumption \eqref{eq:BRBR}, by taking $R_\Omega - R$ sufficiently small. 

We will show that there exists a threshold $\mu^{**}>0$ and a set $S_0$ such that
\begin{enumerate}
\item for every $0<\mu\le\mu^{**}$, $S_0\cap \left(0,\mu\right)$ has positive Lebesgue measure;
\item if either $\mu \in S_0$, or $\mu\in \overline{S}_0$ and it is the limit of an increasing sequence of
elements of $S_0$, then there exists a second solution of \eqref{eq:main1}, at the mountain pass
level $c_\mu$.
\item for any other $\mu\in \overline{S}_0$, there exists a second solution of \eqref{eq:main1},
having energy level $\ell\ge \frac{1}{N}S^{N/2}\mu^{1-\frac{N}{2}}$ (but maybe $\ell\neq c_\mu$).
\end{enumerate}
As a consequence, Theorem \ref{Th:mp2} will follow by the usual change of variables
\eqref{eq:changeofvariables}.

Let $\mu^{**}>0$ be such that Lemmas \ref{lem:w0} and \ref{lem:w1} hold true, and consider any interval $J= \left(\frac{\mu_0}{2},\mu_0\right)$, with $\mu_0\le\mu^{**}$. Finally, let $c_\mu$ be defined
as in \eqref{level:mp}, with $w_{0},w_{1}\geq0$ as in Lemmas \ref{lem:w0} and \ref{lem:w1}. With
these assumptions, we are in a position to apply Lemma \ref{JJ:theo}. As a consequence, we deduce that
for almost every $\mu\in J$, $c_\mu$ is differentiable and there exists a bounded Palais-Smale sequence
$(u_{n})_{n}$ for $E_{\mu}$ at level $c_{\mu}$, with
\begin{equation}\label{bound:u:mp}
\|\nabla u_{n}\|_{2}^{2} \le 2c_{\mu}-2c^{\prime}_{\mu}\mu
+6\mu +o_n(1).
\end{equation}
Actually, since, given $\mu$, the argument above is independent of $J\ni\mu$ (see Remark \ref{rmk:c_mu not depend on mu0}), we can assume that such a Palais-Smale sequence exists for almost every
$\mu\in (0,\mu^{**})$.

Since $(u_n)_n$ is bounded, the corresponding sequence of multipliers $(\lambda_n)_n$, defined
as in \eqref{eq:lambda_def}, is bounded too.
We infer the existence of $u^0\in M$ such that, up to subsequences,
\begin{equation}\label{eq:weak_thm2}
u_{n}\rightharpoonup u^{0} = u^{0}_\mu \ \ \text{weakly in } H^{1}_{0}(\Omega).
\end{equation}
Moreover $u^0$ is a positive solution of \eqref{eq:main1}, because of
Corollary \ref{cor:split}, property \ref{positivejnjn} in Lemma \ref{JJ:theo} and the strong maximum
principle. Notice that the whole discussion remains true in case we consider a smaller value, although
positive, of $\mu^{**}$. In the following, we will often need to do that, in order to exploit the
asymptotic behavior of $c_\mu$ as $\mu\to0$ to infer finer properties of the functions we consider.

Now, if we could prove that the convergence in \eqref{eq:weak_thm2} is strong, then $u^0$ would be a
solution of \eqref{eq:main1} at level $c_\mu$ (and thus different
from the local minimizer at level $m_\mu$ that we obtained in the previous section). As we noticed,
we can do this for every $\mu$ only assuming further assumptions on $\Omega$, as in the next section. 
In the generality we are considering here,
we will obtain strong convergence only for a subset of $(0,\mu^{**})$ of positive measure. The
key result in this direction is the following estimate, inspired by \cite[Lemma 2]{MR1245097}, which bounds the value of $c'_\mu$ to obtain
a sharper bound for $\|\nabla u_{n}\|_{2}^{2}$ in \eqref{bound:u:mp}.

\begin{lemma}\label{monoty}
Let $g(\mu)$, $h(\mu)$ be defined as in Lemma \ref{energy:mp}, and let us consider a possibly smaller
value of $\mu^{**}$ in such a way that $g$ is non-increasing on $(0,\mu^{**})$.
Then there exists a set $S_0\subset(0,\mu^{**})$  such that
\begin{enumerate}
\item $\left|S_0 \cap \left(0,\mu\right)\right|>0$ for every $\mu\in (0,\mu^{**})$ and
\item for every $\mu \in S_0$, $c_\mu$ is differentiable and
\begin{equation*}
-c'_\mu = |c^{\prime}_\mu|\leq (1+\delta(\mu))|g^{\prime}(\mu)|=-(1+\delta(\mu))g^{\prime}(\mu),
\end{equation*}
where
\begin{equation}\label{eq:delta_mu}
\delta(\mu):=
\begin{cases}
\mu^p & N\ge5,\ p=\dfrac{N}{2}-\dfrac{1}{2},\\
\mu|\ln \mu|^{-1/2} & N=4,\smallskip\\
\delta_0\mu^{1/2} & N=3,
\end{cases}
\end{equation}
and, when $N=3$, we can choose as $\delta_0$ any number such that
\begin{equation}\label{eq:delta0_N=3}
\frac13 S^{3/2} \delta_0 > \frac14 \lambda_1(B_R),
\end{equation}
up to taking $\mu^{**}$ suitably small, depending on $\delta_0$ (here $B_R$ satisfies \eqref{eq:assumpt_ball_N=3}).
\end{enumerate}
\end{lemma}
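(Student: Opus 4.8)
The plan is to run an argument in the spirit of Struwe's original monotonicity trick, \cite[Lemma 2]{MR1245097}, exploiting that we already control $c_\mu$ from above by the explicit function $g(\mu)$ (Lemma \ref{energy:mp}) and from below by $\frac{1}{N}S^{N/2}\mu^{1-N/2}$ (see \eqref{eq:c_mu_from_below}). Since $\mu \mapsto c_\mu$ is non-increasing, it is differentiable a.e., and at every differentiability point $|c'_\mu| = -c'_\mu$ equals $\lim_{h\to0^+}\frac{c_{\mu-h}-c_\mu}{h}$. The idea is that if $-c'_\mu$ were \emph{larger} than $(1+\delta(\mu))(-g'(\mu))$ on a set of positive measure, then integrating this differential inequality over that set, together with the endpoint comparison $c_\mu \le g(\mu)$ and $c_\mu \ge \frac1N S^{N/2}\mu^{1-N/2} = g(\mu) - h(\mu)$, would force a contradiction, because the ``drop'' of $c$ over a subinterval cannot exceed the drop of $g$ by more than the total oscillation $h$, and the factor $\delta(\mu)$ is calibrated exactly so that $\int \delta(\mu)|g'(\mu)|\,d\mu$ dominates the relevant contribution of $h$.

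Concretely, I would first set up the comparison functions: let $G(\mu) := g(\mu) + \int_\mu^{\mu^{**}} \delta(t)|g'(t)|\,dt$ (or a suitable primitive), chosen so that $G$ is still non-increasing on $(0,\mu^{**})$ after shrinking $\mu^{**}$, and so that $G'(\mu) = g'(\mu) - \delta(\mu)|g'(\mu)| = (1+\delta(\mu))g'(\mu)$ at points of differentiability. Define $S_0$ to be the set of $\mu\in(0,\mu^{**})$ where both $c_\mu$ and $g$ are differentiable and $-c'_\mu \le (1+\delta(\mu))(-g'(\mu))$. The claim $|S_0 \cap (0,\mu)|>0$ for every $\mu$ is then equivalent to showing that the complement cannot contain an interval $(0,\mu)$ up to null sets; if it did, then on $(0,\mu)$ we would have $-c'_t > (1+\delta(t))(-g'(t)) = -G'(t)$ a.e., hence $c_\mu - c_{\mu'} < G_\mu - G_{\mu'}$ for $0 < \mu' < \mu$ wait --- more carefully, integrating $c'_t < G'_t$ gives $c_{\mu} - c_{\mu'} \le \int_{\mu'}^{\mu} c'_t\,dt < \int_{\mu'}^{\mu} G'_t\,dt = G_\mu - G_{\mu'}$; letting $\mu'\to 0^+$ and using $c_{\mu'} \to +\infty$ while $G_{\mu'} \to +\infty$ at the \emph{same} leading rate $\frac1N S^{N/2}(\mu')^{1-N/2}$ (since $h$ and $\int\delta|g'|$ are lower-order, except in $N=3$ where they are $O(1)$), one obtains a bounded quantity on the right but a divergence mismatch, contradiction. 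The dimension-by-dimension bookkeeping — checking that $h(\mu)$ is genuinely absorbed by $\int_\mu^{\mu^{**}} \delta(t)|g'(t)|\,dt$ — is where the precise exponents $p = N/2 - 1/2$, the $|\ln\mu|^{-1/2}$, and the constant $\delta_0$ with \eqref{eq:delta0_N=3} come from: in $N=3$ both $g(\mu)$ and its correction carry the additive constant $\frac14\lambda_1(B_R)$, so $\delta_0$ must be large enough that $\frac13 S^{3/2}\delta_0 > \frac14\lambda_1(B_R)$ to beat it, and this is possible precisely because assumption \eqref{eq:assumpt_ball_N=3} leaves room (this is the only place the hypothesis \eqref{eq:BRBR} is used in this lemma).

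For part (2), once $S_0$ is defined as above the differentiability of $c_\mu$ on $S_0$ is built in, and the inequality $-c'_\mu \le (1+\delta(\mu))(-g'(\mu))$ is immediate. I would compute $-g'(\mu)$ explicitly from \eqref{eq:mpfromabove}--\eqref{eq:mpfromabove_h}: $-g'(\mu) = \frac{N-2}{2N}S^{N/2}\mu^{-N/2} - h'(\mu)$, and since $h$ is lower order its derivative is dominated by the first term for $\mu$ small, so $-g'(\mu) \le \frac{N-2}{N}S^{N/2}\mu^{-N/2}$ say, after shrinking $\mu^{**}$; combined with the bound on $-c'_\mu$ this will feed the refined estimate $\|\nabla u_n\|_2^2 \le 2c_\mu - 2c'_\mu\mu + 6\mu + o_n(1)$ from \eqref{bound:u:mp} into something comparable to $\bar\alpha(\mu) = S^{N/2}\mu^{1-N/2}$, which is what is needed downstream to rule out bubbling via Corollary \ref{cor:split}. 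The main obstacle I anticipate is not the differential-inequality mechanics but making the measure-theoretic step airtight: ``comparing derivatives of ordered functions only on a set of non-vanishing measure'' (as the authors put it in Remark \ref{rmk:normvslevinstrongconv}) means one cannot conclude $-c'_\mu \le (1+\delta)(-g'_\mu)$ pointwise everywhere, only that the exceptional set where it fails, intersected with every $(0,\mu)$, is non-null's complement --- i.e. one must carefully phrase $S_0$ as ``the good set'' and prove $|(0,\mu)\setminus S_0| < |(0,\mu)|$, which is exactly the contrapositive integration argument sketched above. Keeping track of which $\mu^{**}$-shrinkings are needed (monotonicity of $g$, of $G$, smallness of $h'$ relative to the main term, and the $\delta_0$-dependent shrinking in $N=3$) and verifying they are finitely many and compatible is the remaining careful-but-routine part.
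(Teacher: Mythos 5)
Your approach is structurally the same as the paper's: both define $S_0$ as the "good" set, assume for contradiction that $|S_0\cap(0,\mu_0)|=0$ for some $\mu_0$, integrate the differential inequality $-c'_t> -(1+\delta(t))g'(t)$ over a subinterval, and compare with the two-sided bound $g(\mu)-h(\mu)\le c_\mu\le g(\mu)$ to get a contradiction. The paper integrates over $(\mu_1,\gamma\mu_1)$ with $\gamma$ \emph{fixed} (and $\gamma=2$ for $N\ge4$), sends $\mu_1\to 0$, and the contradiction becomes $K_\gamma\,\delta(\mu_1)\,\mu_1^{1-N/2}\le (1+o(1))\,h(\gamma\mu_1)$, with $K_\gamma=\tfrac1N S^{N/2}(1-\gamma^{1-N/2})$; for $N=3$ both sides are $O(1)$, which is precisely why the lower bound $K_\gamma\delta_0>\tfrac14\lambda_1(B_R)$ --- and hence \eqref{eq:delta0_N=3} --- is required. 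You instead integrate over $(\mu',\mu_0)$ and send $\mu'\to 0^+$, which, written out cleanly, reduces to
\[
h(\mu_0)>\int_{\mu'}^{\mu_0}\delta(t)\,|g'(t)|\,dt.
\]
This is a valid variant and it does prove the lemma, but two points in your write-up are off. First, your claim that "$h$ and $\int\delta|g'|$ are lower-order, except in $N=3$ where they are $O(1)$" is incorrect: for $N=3$ and $N=4$ the integrand behaves like $t^{-1}$ (resp.\ $t^{-1}|\ln t|^{-1/2}$), so the right-hand side \emph{diverges} as $\mu'\to0^+$, and the contradiction is immediate, while for $N\ge5$ the integral converges to $\approx\tfrac{N-2}{N}S^{N/2}\mu_0^{1/2}$ and one must compare exponents $\tfrac12<\tfrac{(N-2)(N-4)}{4}$ and then shrink $\mu^{**}$. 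Second, and as a consequence, your own variant does not actually need the lower bound \eqref{eq:delta0_N=3} at all for $N=3$: the logarithmic divergence defeats the additive constant $\tfrac14\lambda_1(B_R)$ for \emph{any} $\delta_0>0$, so your rationalization of \eqref{eq:delta0_N=3} ("$\delta_0$ must be large enough to beat it") does not reflect your argument. The lower bound on $\delta_0$ is what the paper's fixed-$\gamma$ argument needs, and, together with the separate \emph{upper} bound on $\delta_0$ required in Lemma \ref{lem:ex_in_S0}, is where the hypothesis $\lambda_1(B_R)<\tfrac43\lambda_1(\Omega)$ actually enters; but it is not needed inside this lemma in your version. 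Since the lemma only asserts that any $\delta_0$ satisfying \eqref{eq:delta0_N=3} works, your stronger conclusion is consistent with the statement, so there is no gap in establishing it --- just an imprecise accounting of the asymptotics and a mischaracterization of the role of \eqref{eq:delta0_N=3}. One final small remark: integrating a non-increasing but not necessarily absolutely continuous $c_\mu$ only gives $c_{\mu'}-c_{\mu_0}\ge\int_{\mu'}^{\mu_0}(-c'_t)\,dt$, not equality; you use this in the right direction, but it should be stated.
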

\begin{proof}
By contradiction, we assume that there exists $\mu_{0}\le\mu^{**}$ such that, for almost every
$\mu\in(0,\mu_{0})$,
\begin{equation*}
-c^{\prime}_{\mu}\ge-(1+\delta(\mu))g^{\prime}(\mu).
\end{equation*}
Without loss of generality, we can assume that $\delta$ is non-decreasing on $(0,\mu_0)$.
Let now
\begin{equation}\label{eq:def_gamma}
\gamma=
\begin{cases}
2 & N\ge 4,\\
>1,\text{ to be chosen below,} & N=3,
\end{cases}
\end{equation}
and take $\mu_1>0$ in such a way that
$(\mu_1,\gamma\mu_1)\subset (0,\mu_0)$. Since $c_\mu$ is non-increasing, 
by Rademacher's theorem we infer, for a.e. $\mu_1$,
\begin{equation}\label{eq:S_0_1}
    \begin{aligned}\displaystyle
c_{\mu_{1}}&\geq\int\limits_{\mu_{1}}\limits^{\gamma\mu_{1}}-c^{\prime}_{\mu}dx+c_{\gamma\mu_{1}}\ge\int\limits_{\mu_{1}}\limits^{\gamma\mu_{1}}-(1+\delta(\mu))g^{\prime}(\mu)dx+c_{\gamma\mu_{1}}\\
&\geq(1+\delta(\mu_1))\left[g(\mu_{1})-g(\gamma\mu_{1})\right]+c_{\gamma\mu_{1}}\\
&=g(\mu_{1})+\delta(\mu_1)[ g(\mu_{1})-g(\gamma\mu_{1})] -[g(\gamma\mu_{1})-c_{\gamma\mu_{1}}].
    \end{aligned}
\end{equation}
Recalling Lemma \ref{energy:mp} we have that, for every $\mu$,
\[
g(\mu)-h(\mu) \le c_\mu \le g(\mu);
\]
on the other hand, by direct calculations,
\[
g(\mu_{1})-g(\gamma\mu_{1}) = \frac{1}{N}S^{N/2}(1-\gamma^{1-\frac{N}{2}})\mu_{1}^{1-\frac{N}{2}}
+ h(\mu_{1})-h(\gamma\mu_{1}) \ge K_\gamma\mu_{1}^{1-\frac{N}{2}} - h(\gamma\mu_1),
\]
where
\begin{equation}\label{eq:Kgamma}
K_\gamma = \frac{1}{N}S^{N/2}(1-\gamma^{1-\frac{N}{2}}).
\end{equation}
In particular, when $N=3$, we can use condition \eqref{eq:delta0_N=3} in order to
choose $\gamma$ in such a way that
\begin{equation}\label{eq:N=3_Kgamma}
K_\gamma \delta_0 > \frac14 \lambda_1(B_R).
\end{equation}

Collecting the above inequalities, we have that for every $\eps>0$ there exists $\mu_0$ sufficiently small such that, for a.e. every $\gamma\mu_1\le\mu_0$ \eqref{eq:S_0_1} rewrites as
\[
0 \ge K_\gamma\delta(\mu_1)\mu_{1}^{1-\frac{N}{2}} - (1+\delta(\mu_1))h(\gamma\mu_1)
\ge K_\gamma\delta(\mu_1)\mu_{1}^{1-\frac{N}{2}} - (1+\eps) h(\gamma\mu_1).
\]
Since $K_\gamma>0$, we easily get a contradiction for $\mu_0$ sufficiently small,
recalling that
\[
 \delta(\mu)\mu^{1-\frac{N}{2}} =
 \begin{cases}
 \mu^{1/2} & N\ge5,\\
|\ln \mu|^{-1/2} & N=4,\\
\delta_0 & N=3,
 \end{cases}
\qquad
h(\gamma\mu) =
\begin{cases}
C'\mu^{\frac{(N-2)(N-4)}{4}}, \ \ \ &\mbox{if} \ N\geq5,
\smallskip\\
C'|\ln \mu|^{-1}, \ \ \ &\mbox{if} \ N=4,\\
\frac{1}{4}\lambda_{1}(B_{R})+C'\mu^{1/2}&\mbox{if} \ N=3:
\end{cases}
\]
this is clear for $N=4$, and also for $N\ge5$, since $\frac{1}{2}< \frac{3}{4}\le \frac{(N-2)(N-4)}{4}$ in such case; on the other hand, in dimension $N=3$,
this follows from \eqref{eq:N=3_Kgamma}.
\end{proof}

Based on the previous lemma, we can prove strong convergence in $S_0$.
\begin{lemma}\label{lem:ex_in_S0}
For every $\mu \in S_0$, the convergence in \eqref{eq:weak_thm2} is strong. Moreover,
$u^0= u^0_\mu$ satisfies
\begin{equation}\label{eq:norm_in_S0}
\|\nabla u_\mu^0\|_2^2 \le S^{N/2}\mu^{1-\frac{N}{2}} + \lambda_1(\Omega) - \eps,
\end{equation}
for  $\eps>0$ suitably small (and for a possibly smaller value of $\mu^{**}$).
\end{lemma}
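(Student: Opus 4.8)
The plan is to argue by contradiction using Corollary~\ref{cor:split} together with the sharpened bound on $\|\nabla u_n\|_2^2$ coming from Lemma~\ref{monoty}. So fix $\mu\in S_0$ and suppose the weak convergence in \eqref{eq:weak_thm2} is not strong. Then, combining \eqref{bound:u:mp} with the estimate $-c'_\mu\le -(1+\delta(\mu))g'(\mu)$ from Lemma~\ref{monoty}, I get
\[
\liminf_n \|\nabla u_n\|_2^2 \le 2c_\mu - 2c'_\mu\mu + 6\mu \le 2g(\mu) + 2(1+\delta(\mu))|g'(\mu)|\,\mu + 6\mu.
\]
On the other hand, \eqref{eq:split_norm} gives $\liminf_n\|\nabla u_n\|_2^2 \ge \|\nabla u^0\|_2^2 + S^{N/2}\mu^{1-N/2}$. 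Putting these together yields
\[
\|\nabla u^0\|_2^2 \le 2g(\mu) + 2(1+\delta(\mu))|g'(\mu)|\,\mu + 6\mu - S^{N/2}\mu^{1-N/2}.
\]
The heart of the argument is then a careful asymptotic expansion, as $\mu\to0$, of the right-hand side, using the explicit forms $g(\mu)=\frac1N S^{N/2}\mu^{1-N/2}+h(\mu)$ and the formula \eqref{eq:mpfromabove_h} for $h$. Since $\frac2N S^{N/2}\mu^{1-N/2} - S^{N/2}\mu^{1-N/2} = -\frac{N-2}{N}S^{N/2}\mu^{1-N/2}$ is strongly negative, the dominant $\mu^{1-N/2}$ terms must be tracked precisely; $2g'(\mu)\mu$ contributes $2\cdot\frac{2-N}{2}\cdot\frac1N S^{N/2}\mu^{1-N/2} = \frac{2-N}{N}S^{N/2}\mu^{1-N/2}$ plus $2h'(\mu)\mu$, so the leading terms combine to $\bigl(\tfrac2N + \tfrac{2-N}{N} - 1\bigr)S^{N/2}\mu^{1-N/2} = -\tfrac{2(N-2)}{N}S^{N/2}\mu^{1-N/2} < 0$, which is very negative and would force $\|\nabla u^0\|_2^2<0$ unless it is compensated. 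I expect this is why the construction of $S_0$ and the precise choice of $\delta(\mu)$ were made: the $\delta(\mu)|g'(\mu)|\mu$ term and the $h$, $h'$ terms are exactly calibrated so that the bound reads
\[
\|\nabla u^0\|_2^2 \le S^{N/2}\mu^{1-N/2} + \lambda_1(\Omega) - \eps + o(1)
\]
as $\mu\to0$ (here the $\lambda_1(\Omega)$ presumably enters through the constant in $h$ when $N=3$ via $\lambda_1(B_R)<\tfrac43\lambda_1(\Omega)$, and for $N\ge4$ it simply appears because the remaining terms are $o(1)$ and can be absorbed below $\lambda_1(\Omega)-\eps$). For $N=3$ specifically, the $\tfrac14\lambda_1(B_R)$ term in $h(\gamma\mu)$ is the obstruction, and it is killed by \eqref{eq:N=3_Kgamma}/\eqref{eq:delta0_N=3}, which is precisely the place where condition \eqref{eq:assumpt_ball_N=3} (equivalently \eqref{eq:BRBR}) is used.

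More concretely, I would proceed in three steps. First, write out the inequality for $\|\nabla u^0\|_2^2$ displayed above and substitute the explicit $g,h$; group the terms into (a) the $\mu^{1-N/2}$ scale, (b) the scale of $\delta(\mu)\mu^{1-N/2}$ and of $h(\mu)$, $\mu h'(\mu)$, and (c) genuinely lower-order terms. Second, check dimension by dimension (using the three cases of \eqref{eq:mpfromabove_h} and \eqref{eq:delta_mu}) that the scale-(a) terms exactly cancel, the scale-(b) terms are either a constant strictly below $\lambda_1(\Omega)$ (the delicate $N=3$ case, handled by \eqref{eq:delta0_N=3}) or vanish as $\mu\to0$ (the cases $N=4$, $N\ge5$), and the scale-(c) terms are $o(1)$; conclude that for $\mu^{**}$ small enough, \eqref{eq:norm_in_S0} holds for any $\mu\in S_0$ with a fixed $\eps>0$ — \emph{provided} the convergence were not strong. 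Third, invoke the Poincar\'e inequality: since $u^0\in M$ is a nontrivial solution, $\|\nabla u^0\|_2^2 \ge \lambda_1(\Omega)\|u^0\|_2^2 = \lambda_1(\Omega)$. Wait — that only gives $\|\nabla u^0\|_2^2\ge\lambda_1(\Omega)$, which is compatible with \eqref{eq:norm_in_S0}; so \eqref{eq:norm_in_S0} is not itself the contradiction but rather the \emph{conclusion} to be recorded for later use, and the strong convergence must come from a separate observation. Re-examining: the actual contradiction with non-strong convergence is obtained by instead combining \eqref{eq:split_norm} in the sharper form $\liminf\|\nabla u_n\|_2^2 \ge \|\nabla u^0\|_2^2 + S^{N/2}\mu^{1-N/2} \ge \lambda_1(\Omega) + S^{N/2}\mu^{1-N/2}$ with the upper bound $\liminf\|\nabla u_n\|_2^2 \le S^{N/2}\mu^{1-N/2} + \lambda_1(\Omega) - \eps$ derived from the asymptotics — these are incompatible, so convergence is strong. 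Then, in the strong-convergence case, $\|\nabla u_n\|_2^2 \to \|\nabla u^0\|_2^2 \le 2g(\mu) - 2c'_\mu\mu + 6\mu \le S^{N/2}\mu^{1-N/2} + \lambda_1(\Omega) - \eps$ by the same asymptotic computation, giving \eqref{eq:norm_in_S0}.

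The main obstacle, as signalled in Remark~\ref{rmk:normvslevinstrongconv}, is the bookkeeping in the asymptotic expansion: one must verify that \emph{all} the $\mu^{1-N/2}$-order contributions — coming from $2c_\mu$, from $-2c'_\mu\mu$ (via $-2g'(\mu)\mu$ and via the $\delta(\mu)$ correction), and from $-S^{N/2}\mu^{1-N/2}$ — cancel down to something controlled by $\lambda_1(\Omega)$, and that the dimension-dependent leftover ($h$ and $\mu h'$ terms) is sub-dominant for $N\ge4$ but leaves the borderline constant $\tfrac14\lambda_1(B_R)$ for $N=3$, which is absorbed below $\lambda_1(\Omega)$ exactly because $\lambda_1(B_R)<\tfrac43\lambda_1(\Omega)$. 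This is where the hypotheses \eqref{eq:BRBR} and the calibration \eqref{eq:delta0_N=3} of $\delta_0$ are consumed, and getting every constant to land on the correct side of the inequality is the delicate part; everything else is routine once the expansion is in place.
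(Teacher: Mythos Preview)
Your overall strategy is correct and is exactly the paper's: bound $\|\nabla u_n\|_2^2$ via \eqref{bound:u:mp} and Lemma~\ref{monoty}, show the bound is $\le S^{N/2}\mu^{1-N/2}+\lambda_1(\Omega)-\eps$ by an asymptotic expansion, and then contradict \eqref{eq:split_norm} together with $\|\nabla u^0\|_2^2\ge\lambda_1(\Omega)$ (Poincar\'e on $M$). The bound \eqref{eq:norm_in_S0} then drops out by weak lower semicontinuity applied to the same estimate on $\|\nabla u_n\|_2^2$.

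There is, however, a sign slip in your expansion that causes your temporary confusion. In $2g(\mu)+2(1+\delta)|g'(\mu)|\mu+6\mu$, the term $2|g'(\mu)|\mu=-2g'(\mu)\mu$ contributes $+\tfrac{N-2}{N}S^{N/2}\mu^{1-N/2}$ to leading order, not $\tfrac{2-N}{N}$. Hence the scale-$\mu^{1-N/2}$ coefficients in your displayed bound for $\|\nabla u^0\|_2^2$ are $\tfrac{2}{N}+\tfrac{N-2}{N}-1=0$: exact cancellation, as you correctly say later, and \emph{not} a large negative term to be ``compensated''. The purpose of $\delta(\mu)$ is then to control the \emph{sub-leading} contribution $2\delta(\mu)|g'(\mu)|\mu\sim\tfrac{N-2}{N}S^{N/2}\delta(\mu)\mu^{1-N/2}$: for $N\ge5$ (with $\delta(\mu)=\mu^{(N-1)/2}$) this is $O(\mu^{1/2})$; for $N=4$ it is $O(|\ln\mu|^{-1/2})$; for $N=3$ it equals $\tfrac{1}{3}S^{3/2}\delta_0$. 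In the last case the residual constant is $\tfrac{1}{2}\lambda_1(B_R)+\tfrac{1}{3}S^{3/2}\delta_0$, and one needs this to be $<\lambda_1(\Omega)$; the paper secures this by choosing $\delta_0$ with $\tfrac{3}{4}\lambda_1(B_R)<S^{3/2}\delta_0\le\lambda_1(\Omega)-6\eps$, which is possible precisely because $\lambda_1(B_R)<\tfrac{4}{3}\lambda_1(\Omega)$. With this correction your three steps are the paper's proof.
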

\begin{proof}
Let $\mu\in S_0$. Using Corollary \ref{cor:split}, we assume by contradiction
that $u_n \not \to u^0$ strongly in $H^1_0$.

We first deal with the case $N\ge5$. By Lemma \ref{monoty}, we have (recall that $p=\frac{N}{2}-
\frac{1}{2}$)
\begin{equation*}
    \begin{aligned}\displaystyle
-c^{\prime}_{\mu}\mu\leq-(1+\mu^p)g^{\prime}(\mu)\mu\leq (1+\mu^p)\left[\frac{N-2}{2N}S^{N/2}\mu^{1-\frac{N}{2}}+\frac{C(N-2)(N-4)}{4}\mu^{\frac{(N-2)(N-4)}{4}}\right].
    \end{aligned}
\end{equation*}
Using \eqref{bound:u:mp}, \eqref{eq:mpfromabove} and taking into account that $(N-2)(N-4)\ge3$ for $N\ge5$ we obtain, as $n \to +\infty$
\begin{equation}\label{eq:weakboundS_0}
    \begin{aligned}\displaystyle
\|\nabla u_{n}\|_{2}^{2} &\le 2c_{\mu}-2c^{\prime}_{\mu}\mu+6\mu+o_{n}(1)\\
&\le S^{N/2}\mu^{1-\frac{N}{2}}+ \frac{N-2}{N}S^{N/2}\mu^{1/2}+6\mu+(1+\mu^p)C'\mu^{\frac{(N-2)(N-4)}{4}}+o_{n}(1)
\\
&\le S^{N/2}\mu^{1-\frac{N}{2}}+ C''\mu^{1/2}+o_{n}(1)
\\
&\le S^{N/2}\mu^{1-\frac{N}{2}}+ \lambda_1(\Omega) - \eps+o_{n}(1),
    \end{aligned}
\end{equation}
for $\eps>0$ suitably small and for every $\mu\in S_0\cap (0,\mu^{**})$, as long as
$\mu^{**}$ is sufficiently small. Taking the weak limit of \eqref{eq:weakboundS_0} we
obtain \eqref{eq:norm_in_S0}. On the other hand, combining \eqref{eq:weakboundS_0} and
\eqref{eq:split_norm}, we have that $\mu\in S_0\cap (0,\mu^{**})$
yields
\[
\lambda_1(\Omega)\le\|\nabla u^0\|_2^2 \le \liminf \|\nabla u_{n}\|_{2}^{2} - S^{N/2}
\mu^{1-\frac{N}{2}}\le\lambda_1(\Omega)-\eps,
\]
a contradiction. This implies strong convergence, and concludes the proof in case $N\ge5$.

A similar argument works also when $N=4$. Indeed, in such case, by Lemma \ref{monoty} we have
\begin{equation*}
    \begin{aligned}\displaystyle
-c^{\prime}_{\mu}\mu\leq-\left(1+\frac{\mu}{|\ln \mu|^{1/2}}\right)g^{\prime}(\mu)\mu\leq \left(1+\frac{\mu}{|\ln \mu|^{1/2}}\right)\left[\frac{1}{4}S^{2}\mu^{-1}+\frac{C}{|\ln \mu|^{2}}\right],
    \end{aligned}
\end{equation*}
which implies, by \eqref{bound:u:mp},
\begin{equation*}
    \begin{aligned}\displaystyle
\|\nabla u_{n}\|_{2}^{2} &\le 2c_{\mu}-2c^{\prime}_{\mu}\mu+6\mu+o_{n}(1)\\
&\le S^{2}\mu^{-1}+ 6\mu+\frac{C}{|\ln \mu|^{1/2}}+\frac{C}{|\ln \mu|}+\frac{C}{|\ln \mu|^2}+\frac{C\mu}{|\ln \mu|^{5/2}}+o_{n}(1)
\\
&\le S^{2}\mu^{-1}+ C'|\ln \mu|^{-1/2}+o_{n}(1)\\
&\le S^{2}\mu^{-1}+ \lambda_1(\Omega) - \eps+o_{n}(1)
    \end{aligned}
\end{equation*}
as $n \to +\infty$, for a suitable $\eps>0$, $\mu\in S_0\cap (0,\mu^{**})$ and $\mu^{**}>0$ possibly smaller. Then we can conclude as in the case $N\ge 5$.

Finally, in case $N=3$, analogous calculations yield
\begin{equation*}
    \begin{aligned}\displaystyle
-c^{\prime}_{\mu}\mu\leq-\left(1+\delta_0\mu^{1/2}\right)g^{\prime}(\mu)\mu\leq \left(1+
\delta_0\mu^{1/2}\right)\left[\frac{1}{6}S^{3/2}\mu^{-1/2}+C\mu^{1/2}\right],
    \end{aligned}
\end{equation*}
whence
\begin{equation*}
\|\nabla u_{n}\|_{2}^{2} \le S^{3/2}\mu^{-1/2}+ \frac{1}{2} \lambda_1(B_R) + \frac13 S^{3/2} \delta_0  + 6\mu+C\mu^{1/2}+o_{n}(1)
\end{equation*}
as $n \to +\infty$. To conclude, we have by \eqref{eq:assumpt_ball_N=3} that 
\[
\frac12\lambda_1(B_R) < \frac23  \lambda_1(\Omega),
\]
and moreover we can take $\mu^{**}$, $\delta_0$ in Lemma \ref{monoty}
in such a way that (see \eqref{eq:delta0_N=3})
\[
\frac34 \lambda_1(B_R) < S^{3/2} \delta_0 \le  \lambda_1(\Omega) -6\eps,
\]
for $\eps>0$ sufficiently small. We infer
\begin{equation*}
    \begin{aligned}\displaystyle
\|\nabla u_{n}\|_{2}^{2} &\le S^{3/2}\mu^{-1/2}+
\frac{2}{3} \lambda_1(\Omega) +
\frac{1}{3} \lambda_1(\Omega) -2\eps + C\mu^{1/2} + o_{n}(1)\\
&\le S^{3/2}\mu^{-1/2}+\lambda_1(\Omega) -\eps  + o_{n}(1),
    \end{aligned}
\end{equation*}
for $\mu$ small enough, and the lemma follows.
\end{proof}

To conclude the proof of Theorem \ref{Th:mp2}, we have to show that a second solution exist
also for $\mu \in \overline{S}_0 \setminus S_0$, where $S_0$ is as in the previous
lemmas, and to check whether its energy level is
$c_{\mu}$.

Take $\hat{\mu}\in \overline{S}_0\setminus S_0$; by definition, there exists a sequence $(\mu_{n})_{n}\subset S_0$ such that $\mu_{n}\rightarrow\hat{\mu}$.
Now, by Lemma \ref{lem:ex_in_S0}, we know that for every $n$ there exists $u_{\mu_{n}}^{0}$, which satisfies
\[
E_{\mu_{n}}(u^{0}_{\mu_{n}}) = c_{\mu_{n}}, \qquad \nabla_M E_{\mu_{n}}(u^{0}_{\mu_{n}}) =
\nabla_{H^1_0} I_{\lambda_n,\mu_n}(u^{0}_{\mu_{n}})=0;
\]
hence, arguing as in the proof of Corollary \ref{cor:split}, we infer that $(u_{\mu_{n}}^{0})_{n}$ is a Palais-Smale sequence for $E_{\hat\mu}$ on $M$, at level
\[
\ell:=\lim_{n} c_{\mu_n}\ge
\frac{1}{N}S^{N/2}\hat\mu^{1-\frac{N}{2}}> m_{\hat\mu}.
\]
Notice that, since $c_\mu$ is continuous from the left,
we can assure that $\ell = c_{\hat\mu}$ only if $\mu_n < \hat \mu$ for every large $n$.

In any case, by \eqref{eq:norm_in_S0}, $(u_{\mu_{n}}^{0})_{n}$ is bounded in M. From this, there exists $u_{\hat{\mu}}^{0} \in M$ such that
\[
u_{\mu_{n}}^{0} \rightharpoonup u_{\hat{\mu}}^{0} \ \ \ \text{in } M.
\]
Then, Corollary \ref{cor:split} applies to $(u_{\mu_{n}}^{0})_{n}$, and
combining \eqref{eq:norm_in_S0} and \eqref{eq:split_norm} we have that the above convergence
is strong, thus concluding the proof of the theorem.

\subsection{Proof of Theorem \ref{Th:mp}}

In this section, we keep using the same assumptions as in Section \ref{sec:th2}, and we further
assume that $\Omega$ is star-shaped. We will show that, if $\mu^{**}$ is sufficiently small, then \eqref{eq:main1} has a second solution, at the mountain pass level $c_\mu$, for every
$\mu \in (0,\mu^{**})$.

Notice that the first part of the proof of Theorem \ref{Th:mp2}, up to \eqref{bound:u:mp}, holds true
also in this case. In particular, we already know that for almost every $\mu\in (0,\mu^{**})$ there
exists a bounded Palais-Smale sequence $(u_n)_n$ for $E_\mu$ on $M$, at level $c_\mu$, having weak
limit $u^0=u^0_\mu$. For further reference, we denote with $F$ such set:
\begin{equation}\label{eq:def_set_F}
F:=\{\mu>0:c_\mu\text{ admits a PS-sequence satisfying \eqref{bound:u:mp}, \eqref{eq:weak_thm2}}\},
\quad \text{with } |(0,\mu^{**})\setminus F|=0.
\end{equation}

As before, we will first show, for every $\mu\in F$, strong convergence in $H^1_0(\Omega)$,
thus providing existence of a mountain pass solution; next, we will extend the existence result
to every $\hat\mu \in (0,\mu^{**})\setminus F$, by approximating $\hat\mu$ with a sequence in $F$.

We exploit the further assumption about $\Omega$ in the next lemma.

\begin{lemma}\label{lem:conseq_Poho}
If $\Omega$ is star-shaped, $\mu>0$ and $u\in H^1_0(\Omega)$, $u>0$ is a solution of
\eqref{eq:main1} then:
\begin{enumerate}
\item $\lambda>0$;
\item $E_\mu(u) \ge \dfrac{1}{N} \lambda_1(\Omega)$;
\item $\|\nabla u\|_2^2 \le N E_\mu(u)$.
\end{enumerate}
\end{lemma}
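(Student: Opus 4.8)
The plan is to prove the three statements in order, since each one feeds into the next. The crucial tool is the Pohozaev identity for the Dirichlet problem on a star-shaped domain, which I would combine with the equation itself and the boundary condition.

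First I would write down the three integral identities available for a solution $(u,\lambda)$ of \eqref{eq:main1} with $u>0$: testing the equation with $u$ gives
\[
\|\nabla u\|_2^2 = \lambda \int_\Omega u^2\,dx + \mu\|u\|_{2^*}^{2^*} = \lambda + \mu\|u\|_{2^*}^{2^*},
\]
using $\int_\Omega u^2=1$; the energy identity is $E_\mu(u) = \frac12\|\nabla u\|_2^2 - \frac{\mu}{2^*}\|u\|_{2^*}^{2^*}$; and the Pohozaev identity, valid because $\Omega$ is smooth and star-shaped (say with respect to $0$), reads
\[
\frac{N-2}{2}\|\nabla u\|_2^2 - \frac{N}{2}\lambda - N\frac{\mu}{2^*}\|u\|_{2^*}^{2^*} = \frac12\int_{\partial\Omega}|\partial_\nu u|^2\, (x\cdot\nu)\,d\sigma \ge 0,
\]
where the boundary term is nonnegative precisely by star-shapedness ($x\cdot\nu\ge0$). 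Since $2^* = \frac{2N}{N-2}$, one has $N\frac{\mu}{2^*} = \frac{N-2}{2}\mu$, so the Pohozaev identity simplifies to $\frac{N-2}{2}(\|\nabla u\|_2^2 - \mu\|u\|_{2^*}^{2^*}) - \frac{N}{2}\lambda \ge 0$, i.e. $\frac{N-2}{2}\lambda \cdot \tfrac{?}{} $ — more cleanly, subtracting, $\frac{N-2}{2}\lambda = \frac{N-2}{2}(\|\nabla u\|_2^2 - \mu\|u\|_{2^*}^{2^*})$ from the testing identity, so Pohozaev becomes $-\frac{N}{2}\lambda + \frac{N-2}{2}\lambda = -\lambda = \frac12\int_{\partial\Omega}|\partial_\nu u|^2(x\cdot\nu)\,d\sigma$ up to sign bookkeeping. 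The point I want to extract is that the combination of the two identities forces $\lambda$ to have a definite sign; carefully tracking constants (and using that $u>0$ is not identically zero, so $\partial_\nu u \not\equiv 0$ on $\partial\Omega$ by Hopf) yields $\lambda>0$, which is claim (1).

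Next, for claim (3), I would eliminate $\mu\|u\|_{2^*}^{2^*}$ between the testing identity and the energy identity: from $E_\mu(u) = \frac12\|\nabla u\|_2^2 - \frac{\mu}{2^*}\|u\|_{2^*}^{2^*}$ and $\mu\|u\|_{2^*}^{2^*} = \|\nabla u\|_2^2 - \lambda$, I get
\[
E_\mu(u) = \frac12\|\nabla u\|_2^2 - \frac{1}{2^*}\bigl(\|\nabla u\|_2^2 - \lambda\bigr) = \frac1N\|\nabla u\|_2^2 + \frac{1}{2^*}\lambda,
\]
using $\frac12 - \frac{1}{2^*} = \frac1N$. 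Since $\lambda>0$ by claim (1) and $\frac{1}{2^*}>0$, this immediately gives $\|\nabla u\|_2^2 = N E_\mu(u) - \frac{N}{2^*}\lambda < N E_\mu(u)$, which is claim (3) (with the non-strict inequality, certainly). Finally, for claim (2), I note that $u\in M$ means $\int_\Omega u^2=1$, so by the variational characterization of $\lambda_1(\Omega)$ we have $\|\nabla u\|_2^2 \ge \lambda_1(\Omega)$; plugging this into the identity $E_\mu(u) = \frac1N\|\nabla u\|_2^2 + \frac{1}{2^*}\lambda \ge \frac1N\|\nabla u\|_2^2 \ge \frac1N\lambda_1(\Omega)$ gives claim (2).

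The main obstacle I anticipate is purely technical: making the Pohozaev identity rigorous requires enough regularity of $u$ up to $\partial\Omega$ (which follows from elliptic regularity since the right-hand side $\lambda u + \mu u^{2^*-1}$ is controlled and $\partial\Omega$ is smooth), and one must be careful with the sign of the boundary term and with the precise constant $x\cdot\nu\ge0$ encoding star-shapedness. The algebra relating $N$, $2^*$ and the coefficients is routine once the identities are in hand, and the only genuinely non-elementary input beyond Pohozaev is Hopf's lemma to guarantee $\lambda\neq 0$ (i.e. strict positivity) rather than merely $\lambda\ge 0$.
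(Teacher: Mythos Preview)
Your approach is exactly the paper's: combine the Pohozaev identity with the equation tested against $u$ to obtain $E_\mu(u)=\frac1N\|\nabla u\|_2^2+\frac{1}{2^*}\lambda$, from which all three claims follow immediately. Note only that the sign in your displayed Pohozaev identity is reversed---the correct combination gives $2\lambda=\int_{\partial\Omega}|\partial_\nu u|^2\,(x\cdot\nu)\,d\sigma\ge0$, precisely as the paper states---but you already flagged this as bookkeeping, and your invocation of Hopf's lemma to secure \emph{strict} positivity of $\lambda$ is a detail the paper leaves implicit.
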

\begin{proof}
By the well known Pohozaev identity, see e.g. \cite[Ch. III, Lemma 1.4]{Struwebook},
we have that the solution $u$ satisfies
\[
2\lambda\int_{\Omega}u^{2}dx= \int_{\partial\Omega}\left|\frac{\partial u}{\partial \nu}\right|^{2} x\cdot\nu \,dx
\]
(here we are assuming, without loss of generality, that $\Omega$ is star-shaped with respect
to $0\in\Omega$).
Then, by $x\cdot\nu\ge0$ for all $x\in\partial \Omega$, we have $\lambda>0$.

Testing the equation \eqref{eq:main1} with $u$ we obtain
\[
\mu\int_{\Omega}|u|^{2^{*}}dx = \int_{\Omega}|\nabla u|^{2}dx - \lambda,
\]
and finally
\[
E_\mu(u) = \frac12 \int_{\Omega}|\nabla u|^{2}dx-\frac{\mu}{2^{*}}\int_{\Omega}|u|^{2^{*}}dx =
\frac1N \int_{\Omega}|\nabla u|^{2}dx+ \frac{1}{2^{*}}\lambda \ge \frac1N \int_{\Omega}|\nabla u|^{2}dx
\ge \frac1N \lambda_1(\Omega). \qedhere
\]
\end{proof}

\begin{lemma}\label{lem:strongconv_final}
Let $\mu\in F \cap (0,\mu^{**})$. Then, for a
possible smaller value of $\mu^{**}$, $u_n \to u^0$ strongly in $H^1_0(\Omega)$.
\end{lemma}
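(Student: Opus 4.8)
The plan is to argue by contradiction, assuming that $u_n \not\to u^0$ strongly in $H^1_0(\Omega)$, and to reach a contradiction by comparing the energy level $c_\mu$ with the quantity $\frac1N S^{N/2}\mu^{1-\frac N2}$ on one side and with an estimate provided by the star-shapedness of $\Omega$ on the other. First I would apply Corollary \ref{cor:split}: since $(u_n)_n$ is a bounded Palais-Smale sequence for $E_\mu$ on $M$ at level $c_\mu$, with bounded multipliers $\lambda_n\to\lambda^0$, and since by Lemma \ref{JJ:theo}\eqref{positivejnjn} we have $\|(u_n)^-\|_{H^1_0}\to0$, the weak limit $u^0$ is a (nonnegative, hence by the strong maximum principle strictly positive) solution of \eqref{eq:main1} with multiplier $\lambda^0$. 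The failure of strong convergence then gives, by \eqref{eq:split_E},
\[
c_\mu = \liminf E_\mu(u_n) \ge E_\mu(u^0) + \frac1N S^{N/2}\mu^{1-\frac N2}.
\]

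Next I would invoke the star-shapedness through Lemma \ref{lem:conseq_Poho}: since $u^0>0$ solves \eqref{eq:main1}, item (2) of that lemma yields $E_\mu(u^0)\ge\frac1N\lambda_1(\Omega)>0$. Combining this with the displayed inequality gives the lower bound
\[
c_\mu \ge \frac1N S^{N/2}\mu^{1-\frac N2} + \frac1N\lambda_1(\Omega).
\]
The strategy is now to contradict this using the sharp \emph{upper} bound on $c_\mu$ from Lemma \ref{energy:mp}, namely $c_\mu \le \frac1N S^{N/2}\mu^{1-\frac N2} + h(\mu)$. Subtracting $\frac1N S^{N/2}\mu^{1-\frac N2}$ from both estimates, the contradiction will follow as soon as
\[
h(\mu) < \frac1N \lambda_1(\Omega).
\]
For $N\ge5$ and $N=4$ this is immediate for $\mu^{**}$ small, since $h(\mu)=C\mu^{(N-2)(N-4)/4}\to0$ and $h(\mu)=C|\ln\mu|^{-1}\to0$ respectively. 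For $N=3$, $h(\mu)=\frac14\lambda_1(B_R)+C\mu^{1/2}$, so I need $\frac14\lambda_1(B_R)+C\mu^{1/2} < \frac13\lambda_1(\Omega)$ for $\mu$ small; this is exactly where assumption \eqref{eq:assumpt_ball_N=3}, i.e. $\lambda_1(B_R)<\frac43\lambda_1(\Omega)$, enters, ensuring $\frac14\lambda_1(B_R)<\frac13\lambda_1(\Omega)$ with room to absorb $C\mu^{1/2}$ by shrinking $\mu^{**}$.

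The main obstacle, as in the rest of the paper, is dimension $N=3$: there the positive "defect" term $\frac14\lambda_1(B_R)$ in $h(\mu)$ does not vanish, and one must play it off against $\frac1N\lambda_1(\Omega)=\frac13\lambda_1(\Omega)$; this is only possible under the inradius/eigenvalue hypothesis \eqref{eq:BRBR}, and one should be careful to first choose $R<R_\Omega$ close enough to $R_\Omega$ that \eqref{eq:assumpt_ball_N=3} holds (using Remark \ref{rmk:BRR} / continuity of $\lambda_1$), and only then shrink $\mu^{**}$. Once strong convergence holds, $u^0=u^0_\mu\in M$ is a genuine solution of \eqref{eq:main1} at level $c_\mu$, and since $c_\mu \ge \frac1N S^{N/2}\mu^{1-\frac N2} > m_\mu$ by \eqref{eq:c_mu_from_below}, it is distinct from the local minimizer, completing the proof of the lemma. (The remaining passage from $F$ to all of $(0,\mu^{**})\setminus F$, via left-continuity of $c_\mu$ and boundedness coming from Lemma \ref{lem:conseq_Poho}(3), is carried out after this lemma.)
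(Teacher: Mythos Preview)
Your proposal is correct and follows essentially the same approach as the paper's proof: argue by contradiction, combine the splitting estimate \eqref{eq:split_E} from Corollary \ref{cor:split} with the lower bound $E_\mu(u^0)\ge \frac1N\lambda_1(\Omega)$ from Lemma \ref{lem:conseq_Poho}, and contradict this with the upper bound $c_\mu\le \frac1N S^{N/2}\mu^{1-N/2}+h(\mu)$ from Lemma \ref{energy:mp}, distinguishing $N\ge4$ (where $h(\mu)\to0$) from $N=3$ (where assumption \eqref{eq:assumpt_ball_N=3} is needed). Your additional remarks on positivity of $u^0$ and on the subsequent extension to $(0,\mu^{**})\setminus F$ are also accurate but lie outside the scope of this lemma.
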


\begin{proof}
Assume the convergence is not strong. Using Corollary \ref{cor:split}, we have
\begin{equation}\label{eq:forcontr}
c_\mu=\lim E_{\mu}(u_{n})\geq E_{\mu}(u^{0})+ \frac{1}{N} S^{N/2}\mu^{1-\frac{N}{2}}
\ge \frac{1}{N}\lambda_{1}(\Omega) + \frac{1}{N} S^{N/2}\mu^{1-\frac{N}{2}},
\end{equation}
where we used Lemma \ref{lem:conseq_Poho}.
We will get a contradiction using Lemma \ref{energy:mp}. We distinguish two cases.

When $N\geq4$, by Lemma \ref{energy:mp}, we can take $\mu^{**}$ small enough such that, for any $0<\mu<\mu^{**}$, we have
\begin{equation*}
c_{\mu}\leq\frac{1}{N}S^{N/2}\mu^{1-\frac{N}{2}} + \frac{1}{N+1}\lambda_{1}(\Omega),
\end{equation*}
in contradiction with \eqref{eq:forcontr}.

On the other hand, when $N=3$, Lemma \ref{energy:mp} yields
\begin{equation*}
c_{\mu}\leq\frac{1}{3}S^{3/2}\mu^{-1/2}+\frac{1}{4}\lambda_{1}(B_{R})+C\mu^{1/2},
\end{equation*}
which is again in contradiction with \eqref{eq:forcontr}, for $\mu^{**}$ sufficiently small,
recalling assumption \eqref{eq:assumpt_ball_N=3}.
\end{proof}

To conclude the proof of the theorem, we argue as in the end of the proof of Theorem \ref{Th:mp2} to
show that, for every $\mu\in(0,\mu^{**})$, there exists a solution at level $c_\mu$.

Let $\hat{\mu}\in (0,\mu^{**})\setminus F$. Since $F$ has full measure, there exists an increasing
sequence $(\mu_{n})_{n}\subset F$ such that
\[
\frac{\hat \mu}{2} \le \mu_{n}\nearrow\hat{\mu}.
\]
Using Lemma
\ref{lem:strongconv_final}, we have that for every $n$ there exists $u_{\mu_{n}}^{0}$ such that
 \[
E_{\mu_{n}}(u^{0}_{\mu_{n}}) = c_{\mu_{n}}, \qquad \nabla_M E_{\mu_{n}}(u^{0}_{\mu_{n}}) =
\nabla_{H^1_0} I_{\lambda_n,\mu_n}(u^{0}_{\mu_{n}})=0;
\]
once again, we deduce that $(u_{\mu_{n}}^{0})_{n}$ is a Palais-Smale sequence for
$E_{\hat\mu}$ on $M$, at level $c_{\hat\mu}$ (recall that $c_\mu$ is continuous from the left,
due to Lemma \ref{JJ:theo}).

Finally, we can apply Lemma \ref{lem:strongconv_final} to each $u^0_{\mu_n}$ to obtain
\[
\|\nabla u_{\mu_n}^0\|_2^2 \le N E_\mu(u_{\mu_n}^0) = N c_{\mu_n} \le N c_{\hat\mu/2}.
\]
Then $(u_{\mu_{n}}^{0})_{n}$ is uniformly bounded. From this, there exists $u_{\hat{\mu}}^{0} \in M$ such that
\[
u_{\mu_{n}}^{0} \rightharpoonup u_{\hat{\mu}}^{0} \ \ \ \text{in } M.
\]
Then, using Corollary \ref{cor:split}, arguing as in the proof of Lemma \ref{lem:strongconv_final}, we have that the above convergence is strong, thus concluding the proof of the theorem.
\bigskip

\textbf{Acknowledgments.} Work partially supported by the MUR-PRIN-20227HX33Z 
``Pattern formation in nonlinear phenomena'', the Portuguese government through FCT/Portugal
under the project PTDC/MAT-PUR/1788/2020, the MUR grant Dipartimento di Eccellenza 2023-2027, 
the INdAM-GNAMPA group.\bigskip

\textbf{Disclosure statement.} The authors report there are no competing interests to declare.

\bibliography{normalized}{}
\bibliographystyle{abbrv}
\medskip
\small

\begin{flushright}
{\tt dario.pierotti@polimi.it}\\
{\tt gianmaria.verzini@polimi.it}\\
{\tt junwei.yu@polimi.it}\\
Dipartimento di Matematica, Politecnico di Milano\\
piazza Leonardo da Vinci 32, 20133 Milano, Italy.
\end{flushright}

\end{document}